\newtheorem{theorem}{Theorem}
\newtheorem{lemma}{Lemma}[section]
\newtheorem{proposition}[theorem]{Proposition}
\newproof{proof}{Proof}
\journal{Annals of Pure and Applied Logic}
\newcommand{\LL}{\mathbf{L}}
\newcommand{\LLS}{\mathbf{L}\!^*}
\newcommand{\EL}{\mathbf{EL}}
\newcommand{\ELS}{\mathbf{EL}\!^*}
\newcommand{\ELw}{\EL^{\mathrm{wk}}}
\newcommand{\ELs}{\EL\!^{-}}
\newcommand{\ELsu}{\ELs\!\!_{{}\SL{}}}
\newcommand{\ELm}{\EL^{\mathrm{mk}}}
\newcommand{\ELd}{\EL^{\!\dagger}}
\newcommand{\ELdd}{\EL^{\!\ddagger}}
\newcommand{\LLu}{\LL_{{}\SL{}}}
\newcommand{\WEAK}{\mathrm{weak}}
\newcommand{\CONTR}{\mathrm{contr}}
\newcommand{\EXC}{\mathrm{perm}}
\newcommand{\CUT}{\mathrm{cut}}
\newcommand{\RED}{\mathrm{red}}
\newcommand{\SUBST}{\mathrm{subst}}
\newcommand{\MON}{\mathrm{mon}}
\newcommand{\Ac}{\mathcal{A}}
\newcommand{\Gc}{\mathcal{G}}
\newcommand{\GA}{\Gamma\!_{\Ac}}
\newcommand{\Tp}{\mathrm{Tp}}
\newcommand{\BS}{\mathop{\backslash}}
\newcommand{\SL}{\mathop{/}}
\newcommand{\mk}[2]{{#1}_{(#2)}}
\begin{document}

\begin{frontmatter}



\title{Reconciling Lambek's Restriction, Cut-Elimination, and Substitution in the Presence of Exponential Modalities}

\author{Max Kanovich}
\address{University College London and\\
National Research University Higher School of Economics (Moscow)}
\author{Stepan Kuznetsov}
\address{Steklov Mathematical Institute (Moscow) and\\
National Research University Higher School of Economics (Moscow)}
\author{Andre Scedrov}
\address{University of Pennsylvania and\\
National Research University Higher School of Economics (Moscow)}



\begin{abstract}
The Lambek calculus can be considered as a version of non-com\-mu\-ta\-tive
intuitionistic
 linear logic. One of the interesting features of
the Lambek calculus is the so-called ``Lambek's restriction,'' that is, the
 antecedent of any provable sequent should be non-empty.
In this paper we discuss ways of extending the Lambek calculus with
the linear
logic exponential modality while keeping Lambek's restriction.
Interestingly enough, we show that for any system equipped with a
reasonable exponential modality
the following holds: if the system enjoys cut elimination and substitution
to the full extent, then the system necessarily violates Lambek's restriction.
Nevertheless, we show that two of the three conditions can be implemented. Namely, we
design a system with Lambek's restriction and cut elimination
and another system with Lambek's restriction and substitution. 
For both calculi
we prove that they are undecidable, even if we take only one
of the two divisions provided by the Lambek calculus.
The system with cut elimination and substitution 
and without Lambek's restriction 
is folklore and known to be undecidable.
\end{abstract}

\begin{keyword}
Lambek calculus \sep
linear logic \sep
exponential modalities \sep
Lambek's restriction \sep
cut elimination \sep
substitution \sep
undecidability



\end{keyword}

\end{frontmatter}


\section{Introduction}

\subsection{The Lambek Calculus}

The Lambek calculus was introduced by J. Lambek in~\cite{Lambek58} for
mathematical description of natural language syntax by means of so-called
{\em Lambek categorial (type-logical) grammars} (see, 
for example,~\cite{Carpenter98,Morrill11,MootRetore12}). In Lambek
grammars, syntactic categories are represented by logical formulae involving
three connectives: the {\em product} (corresponds to concatenation of words)
and two {\em divisions} (left and right), and syntactic correctness of
natural language expressions corresponds to derivability in  the Lambek calculus.

For simplicity, in this paper we discuss only the product-free fragment of
the Lambek calculus. First we consider not the Lambek calculus $\LL$~\cite{Lambek58}, but its variant $\LLS$~\cite{Lambek61}. The difference between $\LL$ and $\LLS$ is explained in
the end of this introductory section (see ``Lambek's Restriction'').

$\LLS$ is a substructural logic, and here we formulate it as a Gentzen-style sequent
calculus. Formulae of $\LLS$ are called {\em types} and are built from {\em variables,} or
{\em primitive types} ($p$, $q$, $r$, $p_1$, $p_2$, \dots) using two binary connectives:
$\BS$ {\em (left division)} and $\SL$ {\em (right division)}.
Types are denoted by capital Latin letters; finite
(possibly empty) linearly ordered sequences of types by capital Greek ones.
$\Lambda$ stands for the empty sequence. The Lambek calculus derives
objects called {\em sequents} of the form $\Pi \to A$, where
the {\em antecedent} $\Pi$ is a linearly ordered sequence of types and
the {\em succedent} $A$ is a type.

The axioms of $\LLS$ are all sequents $A \to A$, where
$A$ is a type, and the rules of inference are as follows:

\begin{center}
\begin{tabular}{c@{\qquad}c}
$\infer[(\to\BS)]{\Pi \to A \BS B}{A, \Pi \to B}$ &
$\infer[(\BS\to)]{\Delta_1, \Pi, A \BS B, \Delta_2 \to C}
{\Pi \to A & \Delta_1, B, \Delta_2 \to C}$\\[6pt]
$\infer[(\to\SL)]{\Pi \to B \SL A}{\Pi, A \to B}$ &
$\infer[(\SL\to)]{\Delta_1, B \SL A, \Pi, \Delta_2 \to C}
{\Pi \to A & \Delta_1, B, \Delta_2 \to C}$
\end{tabular}
\end{center}

For $\LLS$ and other calculi introduced later in this paper,
we do not include cut as an official rule of the system.
However, the cut rule of the following non-commutative form
$$
\infer[(\CUT)]{\Delta_1, \Pi, \Delta_2 \to B}
{\Pi \to A & \Delta_1, A, \Delta_2 \to B}
$$
is admissible in $\LLS$~\cite{Lambek61}.

By $\LLS_{{}\SL{}}$ (resp., $\LLS_{{}\BS{}}$) we denote the fragment of $\LLS$ with
only the right (resp., left) division connective. Due to the subformula property,
these fragments are obtained from the full calculus simply by restricting
the set of rules.

\subsection{The Exponential Modality}

We see that $\LLS$ lacks structural rules (except for the
implicit rule of associativity).

$\LLS$ can be conservatively embedded~\cite{Abrusci90,Yetter90}
 into a non-commutative, intuitionistic or cyclic,
 variant of Girard's~\cite{Girard87} linear logic.
In the spirit of linear logic connectives, the Lambek
calculus can be extended with
the {\em exponential} unary connective that enables
structural rules (weakening, contraction, and commutativity) in a controlled way.

We'll denote this extended calculus by $\ELS$. Types of $\ELS$ are
built from variables using two binary connectives ($\BS$ and $\SL$) and a
unary one, ${!}$, called the {\em exponential}, or, colloqually, {\em ``bang.''}
If $\Gamma = A_1, \dots, A_k$, then by ${!}\Gamma$ we denote
the sequence ${!}A_1, \dots, {!}A_k$.
$\ELS$ is obtained from $\LLS$ by adding the following rules:

\begin{center}
\begin{tabular}{c@{\qquad}c}
$\infer[(!\to)]{\Delta_1,!A,\Delta_2 \to B}{\Delta_1,A,\Delta_2 \to B}$ &
$\infer[(\to {!})]{!\Gamma \to {!}A}{!\Gamma \to A}$\\[3pt]
$\infer[(\WEAK)]{!A,\Delta \to B}{\Delta \to B}$ &
$\infer[(\CONTR)]{!A,\Delta \to B}{!A, !A, \Delta\to B}$\\[3pt]
$\infer[(\EXC_1)]{\Delta_1,!A,B,\Delta_2 \to C}
{\Delta_1,B,!A,\Delta_2 \to C}$ &
$\infer[(\EXC_2)]{\Delta_1,A,!B,\Delta_2 \to C}
{\Delta_1,!B,A, \Delta_2 \to C}$
\end{tabular}
\end{center}

The following theorem is proved in~\cite{Kanovich93} and~\cite{deGroote05} and
summarized in~\cite{Kanazawa99}.
A weaker result that $\ELS$ with the product and two
divisions is undecidable follows
from~\cite{Lincoln92,Kanovich16}.

\begin{theorem}\label{Th:undecELS}
The derivability problem for $\ELS$ is undecidable.
\end{theorem}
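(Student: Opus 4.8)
The plan is to reduce an undecidable computational problem to derivability in $\ELS$; this is the route realised in \cite{Kanovich93} and \cite{deGroote05}. A convenient source is the halting problem for two-counter (Minsky) machines, or --- since $\ELS$ is non-commutative --- the word problem for semi-Thue (string-rewriting) systems; both are classically undecidable. The whole reduction turns on one feature of $\ELS$: a type $!A$ admits weakening, contraction, and the two permutation rules $\EXC_1,\EXC_2$, so that in a bottom-up proof search an occurrence of $!A$ can be duplicated, deleted, and slid past arbitrary neighbours. Hence a sequent whose antecedent has the shape $!R_1,\dots,!R_k,\Gamma_0$ behaves like ``the datum $\Gamma_0$ together with a reusable, freely relocatable program $R_1,\dots,R_k$'', which is exactly what is needed to make the calculus simulate an unbounded computation.

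Concretely, I would introduce primitive types for the finite control of the machine (one per state), for the counter tokens (or tape letters), and for a couple of delimiters, and encode a configuration $c$ by a sequent $\sigma(c)$ whose antecedent is $!R_1,\dots,!R_k$ followed by a string of primitive types spelling out the instantaneous description of $c$ between endmarkers, with a single fixed succedent $T$ meaning ``an accepting configuration has been reached''. Because $\ELS$ is product-free, composite data that one would naturally write with $\cdot$ must be coded by iterated divisions instead --- e.g.\ via a translation $u \mapsto \langle u \rangle$ with $\Gamma \to \langle u \rangle$ derivable iff $\Gamma$ followed by the letters of $u$ derives a base type --- and each machine instruction is turned into an implicational type $R_i$, built from $\SL$ and $\BS$, whose single use (after a dereliction, and a contraction if the instruction is to be kept) rewrites the fragment of the antecedent coding $c$ into the fragment coding its successor $c'$. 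A zero-test on a counter is coded with the help of the endmarkers, so that the associated $R_i$ can fire only when the relevant block of counter tokens is empty; I would also keep $T$ and all $R_i$ bang-free, so that $!$ occurs only at the outermost level of the program types.

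\emph{Soundness} of the reduction --- a halting run yields a derivation --- is a routine induction on the length of the run: one machine step is mimicked by a bounded number of applications of $(\SL\to)$, $(\BS\to)$, the permutation rules, a dereliction, and one contraction on the relevant $!R_i$, and at the end $(\WEAK)$ discards the leftover program so that the accepting sequent reduces to axioms. The main obstacle is \emph{completeness} --- $\sigma(c_{\mathrm{init}})$ is derivable only if the machine halts. Here I would use that cut is admissible in $\ELS$ as well (by the standard Gentzen-style argument, and in any case part of the development in the cited references), so that every derivable encoded sequent has a cut-free derivation, which by the subformula property manipulates only subformulae of the encoding; since these contain no nested bangs, the promotion rule $(\to{!})$ is never applicable, the only banged formulae that ever occur are the $!R_i$, and apart from the right rules $(\to\SL),(\to\BS)$ that merely decompose the fixed succedent $T$, the only steps that do anything other than shuffle structural rules are the left rules on the program types $R_i$ --- and such a step is available exactly when the local configuration-coding matches the left-hand side of the corresponding instruction. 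A normalisation argument --- permuting the structural rules $\EXC_1,\EXC_2,(\CONTR),(\WEAK)$ downwards and disentangling interleaved instruction applications --- then rewrites an arbitrary cut-free derivation into one that literally reads off a halting run. The points that need care are ruling out ``cheating'' derivations, in particular one in which a zero-test fires while its counter is non-empty (the endmarker bookkeeping together with the subformula property must make this impossible), and controlling the interaction of $\EXC_1,\EXC_2$ with the left rules so that the program types act strictly locally. Once both directions are in place, the machine halts iff $\sigma(c_{\mathrm{init}})$ is derivable in $\ELS$; since $\sigma(c_{\mathrm{init}})$ is computed from the machine and the halting problem is undecidable, derivability in $\ELS$ is undecidable.
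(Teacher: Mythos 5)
The paper does not actually prove this theorem: it is quoted from \cite{Kanovich93} and \cite{deGroote05} (with the weaker version, for the fragment with product and both divisions, attributed to \cite{Lincoln92}), so there is no in-paper argument to compare yours against line by line. Your plan --- a direct encoding of an undecidable machine or rewriting problem, with instructions as ${!}$-prefixed implicational types that are derelicted, contracted, and permuted into place --- is indeed the strategy of those cited sources, and your key observations are the right ones: non-commutativity lets adjacency to an endmarker replace the additive-based zero test of \cite{Lincoln92}, and the missing product can be compensated by iterated divisions. It is worth noting, though, that when the authors need an undecidability proof of exactly this kind for their own systems ($\ELs$ and $\ELm$, Section~\ref{S:undec}), they take a more modular route: they reduce from derivability in $\LL_{\SL}+\Ac$ for special non-logical axioms, which Buszkowski~\cite{Buszko82} had already proved undecidable by simulating generative grammars, so the only new work is Lemma~\ref{Lm:AxToExp}, showing that an axiom $p,q\to r$ or $p\SL q\to r$ can be internalised as a banged antecedent type $(r\SL q)\SL p$ or $r\SL(p\SL q)$. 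That factorisation buys a lot: all machine-simulation bookkeeping is outsourced to a known theorem, and the delicate completeness direction shrinks to a permutation argument for the single rule $({!}\to)$ over two fixed type shapes.

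The one substantive reservation about your proposal is that its completeness direction --- ``a normalisation argument \ldots\ then rewrites an arbitrary cut-free derivation into one that literally reads off a halting run'' --- is precisely where essentially all of the difficulty of such a theorem lives (ruling out cheating zero tests, disentangling interleaved instruction firings, controlling the interaction of $\EXC_1,\EXC_2$ with the left rules), and in the sketch it is asserted rather than carried out. As written this is a correct plan, not yet a proof. If you want to complete it with the least pain, imitate the \fbox{$\Leftarrow$} half of the proof of Lemma~\ref{Lm:AxToExp}: restrict attention to a simplified calculus for sequents ${!}\Gamma,\Pi\to A$ with bang-free $\Gamma$, $\Pi$, $A$, and show that every application of $({!}\to)$ permutes down until it sits immediately below the $(\SL\to)$ (or $(\BS\to)$) step that consumes the corresponding instruction type; each such block then corresponds exactly to one machine step.
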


\subsection{Lambek's Restriction}

The original Lambek calculus $\LL$~\cite{Lambek58} differs
from the presented above in one detail: in $\LL$, sequents with
empty antecedents are not permitted. This restriction applies not
only to the final sequent, but to all ones in the derivation. Thus,
for example, the sequent $(q \BS q) \BS p \to p$ is derivable in
$\LLS$, but not in $\LL$, though its antecedent is not empty
(but the $\LLS$-derivation involves the sequent $\to q \BS q$ with
an empty antecedent). 
Further we shall use the term
{\em Lambek's restriction} for this special constraint.
Actually, Lambek's restriction in $\LLS$ could potentially be violated
only by application of the $(\to\BS)$ and $(\to\SL)$ rules, therefore
$\LL$ can be obtained from $\LLS$ by adding the constraint 
``$\Pi$ is non-empty'' to these two rules.

At first glance, Lambek's restriction looks strange and formal,
but it is highly motivated by linguistic applications.

{\it Example 1.}~\cite[2.5]{MootRetore12}
In syntactic formalisms based on the Lambek calculus,
Lambek types denote syntactic categories. 
Let 
$n$ stand for ``noun phrase,'' then $n \SL n$ 
is going to be a ``noun modifier'' (it can be combined
with a noun phrase on the right producing a new, more complex
noun phrase: $\LL \vdash n \SL n, n \to n$), i.e.\ an adjective.
Adverbs, as adjective modifiers, receive the type $(n \SL n) \SL
(n \SL n)$. Now one can derive the sequent
\mbox{$(n \SL n) \SL (n \SL n), n \SL n, n \to n$} and therefore establish
that, say, ``very interesting book'' is a valid noun phrase (belongs to
syntactic category $n$). However, in $\LLS$ 
one can also derive
\mbox{$(n \SL n) \SL (n \SL n), n \to n$}, where the antecedent describes
syntactic constructions like ``very book,'' that in fact are not
correct noun phrases. 

This example shows that, for linguistic
purposes, $\LL$ is more appropriate than~$\LLS$.

Suprisingly, however, it is not so straightforward to add
 the exponential to $\LL$ or
to impose Lambek's restriction on $\ELS$.
In Sections~2--7 we discuss several ways how to do this,
define a number of the corresponding calculi, prove their properties,
and discuss some issues connected with these calculi.

In Section~\ref{S:undec} we state and prove undecidability results
for calculi defined earlier; in Section~\ref{S:Buszko} we prepare
the techniques then used in Section~\ref{S:undec}.
Finally, Section~\ref{S:conclusion}
contains general discussion of the results and possible directions
of future work.

\subsection{Is it Possible to Maintain Three Properties Together:
Lambek's Restriction, Cut Elimination, and Substitution?}
 
No. We show (Theorems~\ref{Th:ELd} and~\ref{Th:ELdd}) that for any
system equipped a reasonable ${!}$ the following holds:
if the system enjoys cut elimination and substitution in full
extent, then this system necessarily violates Lambek's restriction.
(More precisely, adding one formula starting with ${!}$ to the antecedent
allows $\LLS$ derivations inside such a system.)

Nevertheless, any two of these three properties are realisable in the
calculi defined below, namely:
\begin{itemize}
\item $\ELs$ (Section~\ref{S:ELs}) has Lambek's restriction and cut elimination, but
substitution only for formulae without ${!}$;
\item $\ELm$ (Section~\ref{S:ELm}) has Lambek's restriction and substitution in the full form,
but the cut rule is admissible only for formulae without ${!}$;
\item finally, $\ELS$ enjoys both substitution and cut, but without Lambek's
restriction.
\end{itemize}

\section{Imposing Lambek's Restriction on $\ELS$: the 1st Approach, $\ELw$}

The first, na{\"{\i}}ve way of imposing Lambek's restriction on
$\ELS$ is to restrict only rules $(\to\BS)$ and $(\to\SL)$ in the
same way as it is done in $\LL$. Notice that
 all other rules, including rules for the exponential,
preserve the non-emptiness of the antecedent. 
Denote the calculus
by $\ELw$.

However, such a restriction does not change things significantly,
since the following lemma provides the non-emptiness of the antecedent
for free:
\begin{lemma}\label{Lm:vanish}
Let $p$ be a variable not occurring in a sequent $\Gamma \to A$. Then
$$\ELS \vdash \Gamma \to A \iff
\ELw \vdash {!}p, \Gamma \to A.$$
\end{lemma}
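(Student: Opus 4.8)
The plan is to prove the biconditional by establishing the two directions separately, using cut admissibility (available in $\ELS$ by Theorem~\ref{Th:undecELS}'s underlying calculus, and also in $\ELw$ once we check it) as the main tool.

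\medskip
\emph{The direction $(\Leftarrow)$.} Suppose $\ELw \vdash {!}p, \Gamma \to A$. Since $\ELw$ is just $\ELS$ with an extra side-condition on $(\to\BS)$ and $(\to\SL)$, any $\ELw$-derivation is \emph{a fortiori} an $\ELS$-derivation, so $\ELS \vdash {!}p, \Gamma \to A$. Now I would use the rule $(\WEAK)$ in the reverse reading — or rather, observe that from $\Gamma \to A$ one gets ${!}p,\Gamma\to A$ by weakening, and conversely I want to \emph{remove} the ${!}p$. The clean way: since $p$ does not occur in $\Gamma \to A$, substitute a provable "unit-like" type for $p$. Concretely, pick any type $q$ and note $\ELS \vdash {!}p \to {!}q$ is not quite what we want; instead use that $\to {!}(q\SL q)$ holds in $\ELS$ (derive $\to q\SL q$ via $(\to\SL)$ from $q \to q$, then apply $(\to{!})$ with empty ${!}\Gamma$). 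Then cut this against ${!}p,\Gamma\to A$ — but first substitute $q\SL q$ for $p$ throughout, which is harmless as $p\notin\Gamma\to A$ — to obtain $\ELS \vdash \Gamma\to A$.

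\medskip
\emph{The direction $(\Rightarrow)$.} Suppose $\ELS \vdash \Gamma \to A$. The naive move is to prepend ${!}p$ by weakening, giving $\ELS \vdash {!}p,\Gamma\to A$, but this only lands us in $\ELS$, not in $\ELw$; we must show the derivation can be rearranged to respect Lambek's restriction. This is the substantive part. The idea is to take a cut-free $\ELS$-derivation $\mathcal{D}$ of $\Gamma\to A$ and transform it into an $\ELw$-derivation of ${!}p,\Gamma\to A$ by threading the extra ${!}p$ through the proof tree: at each sequent $\Delta\to B$ of $\mathcal{D}$ we instead derive ${!}p,\Delta\to B$ (or ${!}p$ placed suitably). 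The offending steps in $\mathcal{D}$ are exactly applications of $(\to\BS)$/$(\to\SL)$ with an empty $\Pi$, e.g.\ concluding $\to C\BS D$ from $C\to D$; in the transformed proof we instead need ${!}p\to C\BS D$, which we get from ${!}p, C\to D$ (the transformed premise) by $(\EXC_1)$ to move ${!}p$ past $C$, then $(\to\BS)$ now applied with the \emph{non-empty} antecedent ${!}p$ — legal in $\ELw$. Since a single fixed ${!}p$ (via $(\CONTR)$ when the proof branches, $(\WEAK)$ to introduce it, and $(\EXC_1),(\EXC_2)$ to relocate it) can be made to "ride along" everywhere, every rule application becomes $\ELw$-legal. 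One checks that $(!\to)$, $(\to!)$, $(\WEAK)$, $(\CONTR)$, and the two exchange rules all cope with the extra ${!}p$; for $(\to!)$ note the conclusion has antecedent of the form ${!}\Gamma'$, and ${!}p$ is itself of bang form, so ${!}p,{!}\Gamma'\to{!}A$ is still derived from ${!}p,{!}\Gamma'\to A$ by the same rule.

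\medskip
The main obstacle is bookkeeping in the $(\Rightarrow)$ direction: making precise the induction on the cut-free $\ELS$-derivation, in particular specifying \emph{where} the travelling ${!}p$ sits in each intermediate sequent (I would put it leftmost and use exchange to move it as needed), and handling the two-premise rules $(\BS\to)$, $(\SL\to)$, where only one copy of ${!}p$ should end up in the conclusion — here $(\CONTR)$ is not even needed if we only feed ${!}p$ into one premise and weaken it into the other, but uniformity suggests feeding it into both and contracting. A secondary subtlety is confirming that $\ELw$ itself admits the structural manipulations ($(\WEAK)$, $(\EXC_i)$) we invoke — these are primitive rules of $\ELw$, so no separate admissibility argument is needed, which keeps the argument elementary. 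Cut is only invoked in the $(\Leftarrow)$ direction, inside $\ELS$, where it is known admissible.
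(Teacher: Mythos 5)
Your proposal is correct. The $(\Rightarrow)$ direction is essentially the paper's argument: the paper proves both directions ``by induction on the derivations,'' and your transformation---threading a single ${!}p$ through a cut-free $\ELS$-derivation, using $(\WEAK)$ at axioms, $(\EXC_1)$/$(\EXC_2)$ to reposition it, $(\CONTR)$ at binary rules, and observing that the added ${!}p$ makes every formerly empty antecedent non-empty while keeping $(\to{!})$ applicable (as ${!}p$ is itself banged)---is exactly that induction spelled out. Where you genuinely diverge is the $(\Leftarrow)$ direction: the paper again does a direct induction on the (cut-free) $\ELw$-derivation, erasing the inert occurrences of ${!}p$ and $p$ as it goes, whereas you first pass to $\ELS$ (trivially, since $\ELw$ is $\ELS$ with extra side conditions), then substitute $q\SL q$ for $p$ and cut against the derivable sequent $\to {!}(q\SL q)$. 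Your route is arguably cleaner---it avoids the bookkeeping of tracking where $({!}\to)$ may have turned ${!}p$ into $p$---but it imports cut admissibility and substitution for $\ELS$, which the paper treats as folklore rather than proving; the paper's parenthetical ``recall that $(\CUT)$ is not included in the calculi'' signals that its own proof stays entirely cut-free and self-contained. Given that the whole point of the surrounding sections is how delicately cut and substitution interact with Lambek's restriction, the elementary induction has some methodological value, but as a proof of the stated lemma your version is sound. (Minor slip: moving ${!}p$ rightward past $C$ in the $(\to\BS)$ case uses $(\EXC_2)$, not $(\EXC_1)$.)
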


This lemma shows that $\ELS$-derivations can be enabled
in $\ELw$ by an easy technical trick. Therefore, Theorem~\ref{Th:undecELS}
implies immediately that $\ELw$ is undecidable.

\begin{lemma}
$\ELw \vdash {!}B, \Gamma \to A \iff
\ELS \vdash {!}B, \Gamma \to A$.
\end{lemma}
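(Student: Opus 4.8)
The backward direction ($\ELS \vdash {!}B, \Gamma \to A$ implies $\ELw \vdash {!}B, \Gamma \to A$) is the substantive one; the forward direction is easy, since every rule of $\ELw$ is a rule of $\ELS$, so $\ELw \vdash {!}B, \Gamma \to A$ trivially gives $\ELS \vdash {!}B, \Gamma \to A$. So I will concentrate on the backward implication and reduce it to the previous lemma (Lemma~\ref{Lm:vanish}).

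First I would pick a fresh variable $p$ not occurring in ${!}B, \Gamma \to A$. From $\ELS \vdash {!}B, \Gamma \to A$ I want to conclude $\ELS \vdash \Gamma' \to A$ for a suitable $\Gamma'$ so that Lemma~\ref{Lm:vanish} applies and yields a $\ELw$-derivation of ${!}p, \Gamma' \to A$; then I would patch that derivation back up to the desired conclusion ${!}B, \Gamma \to A$. The natural choice is $\Gamma' = {!}B, \Gamma$ itself: since $\ELS$ has cut and substitution in full, and since derivability is monotone in the obvious way, $\ELS \vdash {!}B, \Gamma \to A$ together with Lemma~\ref{Lm:vanish} (applied to the sequent ${!}B, \Gamma \to A$, whose antecedent is already nonempty but that is harmless) gives $\ELw \vdash {!}p, {!}B, \Gamma \to A$. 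Now the remaining task is to remove the spurious ${!}p$ at the front. Because $p$ does not occur anywhere in ${!}B, \Gamma \to A$, I can substitute $B$ for $p$ throughout the $\ELw$-derivation of ${!}p, {!}B, \Gamma \to A$: this is a legitimate substitution in $\ELS$ (and one checks the restricted rules $(\to\BS), (\to\SL)$ of $\ELw$ are preserved under substitution, since substitution does not turn a nonempty antecedent into an empty one), yielding $\ELw \vdash {!}B, {!}B, \Gamma \to A$. A single application of $(\CONTR)$ then contracts the two copies of ${!}B$ to one, giving $\ELw \vdash {!}B, \Gamma \to A$, as required.

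The one point that needs care — and is the main obstacle — is the interaction of substitution with Lambek's restriction in $\ELw$: I must verify that replacing the fresh variable $p$ by the arbitrary type $B$ in a valid $\ELw$-derivation again produces a valid $\ELw$-derivation, i.e.\ that no application of $(\to\BS)$ or $(\to\SL)$ whose premise had a nonempty antecedent becomes illegal after substitution. This is immediate because substitution replaces a variable by a type, never deleting anything from a sequence, so antecedents stay nonempty; but it is exactly the step where one must confirm that $\ELw$ (unlike the general situation discussed in the introduction) does admit substitution for ${!}$-free formulae such as a single variable, which is all that is used here. With that observation in place the argument is just: apply Lemma~\ref{Lm:vanish}, substitute, contract.
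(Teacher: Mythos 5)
Your argument is correct, but it takes a genuinely different route from the paper's. The paper disposes of this lemma (together with Lemma~\ref{Lm:vanish}) by a direct induction on derivations; you instead \emph{reduce} the non-trivial direction to Lemma~\ref{Lm:vanish}: apply it to the sequent ${!}B, \Gamma \to A$ with a fresh $p$ to get $\ELw \vdash {!}p, {!}B, \Gamma \to A$, substitute $B$ for $p$, and contract the two copies of ${!}B$. The load-bearing step is exactly the one you flag: full substitution (including substituents containing ${!}$) is admissible in $\ELw$, because its only side condition --- non-emptiness of $\Pi$ in $(\to\BS)$ and $(\to\SL)$ --- is stable under substitution, and every other rule (including $(\to{!})$, whose antecedent stays of the form ${!}\Gamma$ after substitution) is preserved verbatim. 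That substitution lemma is itself an induction on derivations, so you have not eliminated induction, only relocated it to a routine, reusable statement; what your route buys is that Lemma~\ref{Lm:vanish} can be used as a black box, and it incidentally isolates the observation (relevant to the paper's later theme) that $\ELw$, unlike $\ELs$ and $\ELm$, admits unrestricted substitution. Two cosmetic remarks: the appeals to cut and ``monotonicity'' of $\ELS$ are not needed --- Lemma~\ref{Lm:vanish} applies directly to ${!}B, \Gamma \to A$; and the substitution must be performed in the $\ELw$-derivation (as your parenthetical correctly does), not in $\ELS$. The forward direction is indeed immediate, since every $\ELw$-rule is an $\ELS$-rule.
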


These two lemmas are proved by induction on the derivations
(recall that $(\CUT)$ is not included in the calculi).

Thus, Lambek's restriction in $\ELw$ vanishes as soon as
the antecedent contains a formula with ${!}$ as the main connective.
And, unfortunately, this acts non-locally: once ${!}A$ appears somewhere
in the antecedent, one can freely derive unwanted things like ``very book''
(see Example~1 above).

\section{Imposing Lambek's Restriction on $\ELS$: the 2nd Approach, $\ELs$}\label{S:ELs}
To overcome the ability of ${!}B$ to mimic the empty antecedent,
we impose more radical restrictions by constructing the following
calculus $\ELs$.

Any formula not of the form ${!}B$ is called a {\em non-bang-formula.}
(A non-bang-formula is allowed to have proper subformulae with ${!}$.)
Now $\ELs$ is defined by the following axioms and
rules:

$$
A \to A
$$

$$
\infer[(\to\BS)\mbox{, where $\Pi$ contains a non-bang-formula}]
{\Pi \to A \BS B}{A, \Pi \to B}
$$

$$
\infer[(\to\SL)\mbox{, where $\Pi$ contains a non-bang-formula}]
{\Pi \to B \SL A}{\Pi, A \to B}
$$

$$
\infer[(\BS\to)]{\Delta_1, \Pi, A \BS B, \Delta_2 \to C}
{\Pi \to A & \Delta_1, B, \Delta_2 \to C}
\qquad
\infer[(\SL\to)]{\Delta_1, B \SL A, \Pi, \Delta_2 \to C}
{\Pi \to A & \Delta_1, B, \Delta_2 \to C}
$$

$$
\infer[({!}\to)\mbox{, where $\Delta_1, \Delta_2$ contains a
non-bang-formula}]
{\Delta_1, {!}A, \Delta_2 \to B}{\Delta_1, A, \Delta_2 \to B}
$$

$$
\infer[(\WEAK)]{{!}A, \Delta \to B}{\Delta \to B}
\qquad
\infer[(\CONTR)]{{!}A, \Delta \to B}{{!}A, {!}A, \Delta \to B}
$$

$$\infer[(\EXC_1)]{\Delta_1,!A,B,\Delta_2 \to C}
{\Delta_1,B,!A,\Delta_2 \to C} \qquad
\infer[(\EXC_2)]{\Delta_1,A,!B,\Delta_2 \to C}
{\Delta_1,!B,A, \Delta_2 \to C}
$$

Note that in the $(\to{!})$ rule of $\ELS$ all the formulae in the
antecedent are of the form ${!}B$. Therefore there is no
$(\to{!})$ rule in $\ELs$. Also note that the cut rule is not officially included in $\ELs$;
in the next section we prove that it is admissible.

\begin{lemma}\label{Lm:ELsRestr}
If $\Pi \to A$ is derivable in $\ELs$ and $\Pi$ is of the form ${!}\Gamma$, then
$A$ is of the form ${!}B$, such that ${!}B$ appears in ${!}\Gamma$.
\end{lemma}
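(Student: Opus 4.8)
The plan is to prove Lemma~\ref{Lm:ELsRestr} by induction on the derivation of $\Pi \to A$ in $\ELs$, under the hypothesis that $\Pi$ consists entirely of bang-formulae, i.e.\ $\Pi = {!}\Gamma$. The statement to establish is that in this case $A = {!}B$ for some ${!}B$ occurring among the components of ${!}\Gamma$. First I would dispose of the base case: an axiom ${!}C \to {!}C$ (the case where the axiom formula is itself a bang-formula) satisfies the conclusion trivially, and an axiom $A \to A$ with $A$ a non-bang-formula is simply not of the required shape, so there is nothing to check.

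Next I would run through the rules, keeping in mind that the hypothesis ``$\Pi = {!}\Gamma$'' severely limits which rules can have produced $\Pi \to A$. The key observation is that the side conditions built into $\ELs$ were designed precisely for this: the restricted $(\to\BS)$, $(\to\SL)$, and $({!}\to)$ rules all require a non-bang-formula somewhere in the relevant part of the antecedent, so none of them can fire when the antecedent is $ {!}\Gamma$. That eliminates those three rules outright. For $(\WEAK)$ the premise is $\Delta \to B$ where the conclusion antecedent is ${!}A, \Delta$; since the conclusion antecedent is all-bang, so is $\Delta$, the induction hypothesis applies to the premise, and $B = A$ carries over as a bang-formula occurring in $\Delta \subseteq {!}\Gamma$. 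For $(\CONTR)$ and the two exchange rules $(\EXC_1), (\EXC_2)$, the premise again has an all-bang antecedent consisting (as a multiset) of the same bang-formulae as the conclusion, so the induction hypothesis transfers directly. The genuinely substantive cases are the left rules $(\BS\to)$ and $(\SL\to)$: here the conclusion antecedent ${!}\Gamma$ must contain the principal formula $A\BS B$ (resp.\ $B\SL A$), but that principal formula is a non-bang-formula, contradicting ${!}\Gamma$ being all-bang — so these cases are vacuous as well.

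The main subtlety, and the step I would write out most carefully, is bookkeeping about \emph{which} occurrence of a bang-formula the succedent matches, since rules like $(\WEAK)$, $(\CONTR)$, and the exchanges shuffle or delete formulae in the antecedent; I need to make sure the ``${!}B$ appears in ${!}\Gamma$'' clause is maintained, not just ``$A$ is a bang-formula.'' In each of the admissible cases this is a routine matching of occurrences, but it is the part where an inattentive argument could slip. A secondary point worth a sentence is that, because cut is not an official rule of $\ELs$ (its admissibility is proved only in the next section), the induction is over the cut-free rules listed, so no cut case arises. Altogether, the proof is short: every rule either cannot apply when the antecedent is all-bang (the three restricted rules and the two left division rules) or propagates the invariant trivially (weakening, contraction, exchange), and the base case is immediate.
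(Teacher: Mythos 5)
Your proof is correct and follows essentially the same route as the paper's: the paper also observes that a sequent with an all-bang antecedent can only arise from an axiom ${!}B \to {!}B$ via $(\WEAK)$, $(\CONTR)$, and $(\EXC_i)$, since every other rule either places a non-bang-formula in the antecedent or explicitly requires one by its side condition. Your version merely spells out the rule-by-rule case analysis and the occurrence bookkeeping that the paper compresses into two sentences.
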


\begin{proof}
A sequent with no non-bang-formula in the antecedent
can be obtained only by applying $(\WEAK)$, $(\EXC)$,
$(\CONTR)$ (in any combination) to an axiom of the form
${!}B \to {!}B$. All other rules either introduce non-bang-formulae to the left,
or explicitly require their existence.
\qed
\end{proof}

Now Lambek's restriction in $\ELs$ is stated in the following
way: {\em in a non-trivial derivable sequent $\Pi \to A$ the antecedent $\Pi$ 
should contain at least one non-bang-formula.}

\section{The Cut Rule in $\ELs$}

\begin{lemma}\label{Lm:nonbang_inherit}
If $\Pi \to A$ is derivable in $\ELs$ and $A$ is a non-bang-formula,
then $\Pi$ necessarily contains a non-bang-formula.
\end{lemma}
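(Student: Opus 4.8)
The plan is to argue by induction on the derivation of $\Pi \to A$ in $\ELs$, with $A$ assumed to be a non-bang-formula, and show that $\Pi$ must contain a non-bang-formula. The base case is an axiom $A \to A$: if $A$ is a non-bang-formula then the antecedent is literally $A$, a non-bang-formula, so we are done. For the inductive step I would examine the last rule applied, organizing the rules into three groups according to how they affect the antecedent.

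First, the rules $(\to\BS)$, $(\to\SL)$, and $({!}\to)$ carry an explicit side condition that the relevant part of the antecedent contains a non-bang-formula; for these rules the conclusion trivially has the desired property, regardless of the shape of $A$, so the induction hypothesis is not even needed. Second, for the left rules $(\BS\to)$ and $(\SL\to)$: here the succedent $C$ of the conclusion equals the succedent of the right premise, so if the conclusion's succedent is a non-bang-formula, so is that of the right premise; by the induction hypothesis the right premise's antecedent $\Delta_1, B, \Delta_2$ contains a non-bang-formula. If that non-bang-formula is inside $\Delta_1$ or $\Delta_2$, it survives into the conclusion's antecedent and we are done. If it is $B$ itself, then $B$ is a non-bang-formula, and hence so is the compound formula $A\BS B$ (resp.\ $B\SL A$), which appears in the conclusion's antecedent — again done.

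The only subtle group is the structural rules $(\WEAK)$, $(\CONTR)$, $(\EXC_1)$, $(\EXC_2)$, whose conclusions have the form ${!}A',\Delta \to B$ or $\Delta_1,!A',B',\Delta_2 \to C$, i.e.\ the antecedent visibly begins with, or contains, a bang-formula. In each case the succedent is unchanged from the (sole) premise, so if the conclusion's succedent is a non-bang-formula, so is the premise's; the induction hypothesis then gives a non-bang-formula in the premise's antecedent. One checks that this non-bang-formula is not the bang-formula that the rule explicitly manipulates, hence it is still present in the conclusion's antecedent. For $(\WEAK)$ the premise's antecedent $\Delta$ is a sub-sequence of the conclusion's, so any non-bang-formula there persists. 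For $(\CONTR)$ the premise $!A',!A',\Delta \to B$ has its non-bang-formula inside $\Delta$ (the two leading formulae are bangs), and $\Delta$ is a sub-sequence of the conclusion's antecedent. For $(\EXC_1)$ and $(\EXC_2)$ the premise and conclusion antecedents are permutations of each other, so the multiset of non-bang-formulae is unchanged. In all cases the conclusion's antecedent contains a non-bang-formula, completing the induction.

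The main obstacle, such as it is, is purely bookkeeping: in the $(\BS\to)$/$(\SL\to)$ case one must not forget the possibility that the witnessing non-bang-formula supplied by the induction hypothesis is exactly the formula $B$ being consumed, and then invoke the elementary observation that a formula of the form $A\BS B$ or $B\SL A$ is itself a non-bang-formula. No case requires anything beyond inspection of the rule schemata.
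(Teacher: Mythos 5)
Your proof is correct, but it takes a different (more self-contained) route than the paper. The paper obtains this lemma in one line as the contrapositive of the immediately preceding Lemma~\ref{Lm:ELsRestr}, which is the stronger statement that a derivable sequent whose antecedent consists entirely of bang-formulae must have a succedent of the form ${!}B$ with ${!}B$ occurring in the antecedent; that stronger lemma is itself established by observing that such sequents can only arise from axioms ${!}B \to {!}B$ via $(\WEAK)$, $(\CONTR)$, and $(\EXC)$. You instead run a direct induction on the derivation, which in effect inlines and re-proves the relevant part of that structural analysis. Your case analysis is sound: the right rules and $({!}\to)$ are settled by their side conditions alone, the structural rules only manipulate bang-formulae so the witness in the premise survives, and in the $(\BS\to)$/$(\SL\to)$ case your bookkeeping about whether the witness is $B$ itself is fine --- though note it is superfluous, since the conclusion's antecedent always contains $A \BS B$ (resp.\ $B \SL A$), which is a non-bang-formula by definition, so that case needs no induction hypothesis at all. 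What the paper's factoring buys is brevity and reuse of an already-proved characterization; what your version buys is independence from Lemma~\ref{Lm:ELsRestr} and an explicit check of every rule schema.
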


\begin{proof}
Immediately from Lemma~\ref{Lm:ELsRestr}.
\qed
\end{proof}

\begin{theorem} The cut rule
$$
\infer[(\CUT)]
{\Delta_1, \Pi, \Delta_2 \to C}
{\Pi \to A & \Delta_1, A, \Delta_2 \to C}
$$
is admissible in $\ELs$.
\end{theorem}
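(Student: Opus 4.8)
The plan is to prove cut admissibility by the standard double induction on the pair $(|A|, h)$, where $|A|$ is the complexity of the cut formula $A$ and $h$ is the sum of the heights of the derivations of the two premises $\Pi \to A$ and $\Delta_1, A, \Delta_2 \to C$. One considers the last rules applied in each premise and shows that the cut can either be eliminated outright or pushed upward, reducing one of the two induction parameters. The new feature, compared with the classical cut-elimination argument for $\ELS$, is the side conditions on $(\to\BS)$, $(\to\SL)$, and $({!}\to)$ that enforce Lambek's restriction; so the real content is checking that every cut-reduction step still produces legitimate $\ELs$-derivations, i.e.\ never creates a rule application whose side condition fails.

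First I would dispose of the easy cases: if either premise is an axiom, the cut is trivial; if the cut formula $A$ is not principal in the last rule of the right premise (the rule acts on a formula inside $\Delta_1$ or $\Delta_2$), we permute the cut above that rule, which lowers $h$. Here I must verify that permuting the cut upward does not destroy a side condition: the antecedent of the rule in question loses $A$ but gains $\Pi$ in its place, and by Lemma~\ref{Lm:nonbang_inherit} applied according to whether $A$ is or is not a non-bang-formula, the presence of a non-bang-formula required by the side condition is preserved. The symmetric case where $A$ is not principal in the left premise is handled by the subformula structure of the right-division rules. Then come the principal cases, one for each connective: $A = A_1 \BS A_2$, $A = A_2 \SL A_1$, and $A = {!}A_1$. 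For the two division connectives the standard reduction replaces the single cut on $A$ by two cuts on the smaller formulae $A_1$ and $A_2$, decreasing $|A|$; again one checks that the side conditions on any $(\to\BS)$ or $(\to\SL)$ remaining in the reassembled derivation still hold.

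The main obstacle is the principal case for the exponential, $A = {!}A_1$, where the left premise ends in some sequence of the structural rules or an $({!}\to)$, and the right premise ends in $(\WEAK)$, $(\CONTR)$, $(\EXC_1)$, $(\EXC_2)$, or $({!}\to)$ acting on that very occurrence of ${!}A_1$. For $(\WEAK)$ the cut simply disappears. For $(\EXC)$ one commutes the cut past the permutation. The genuinely delicate subcase is $(\CONTR)$: the right premise is $\Delta_1, {!}A_1, {!}A_1, \Delta_2' \to C$ contracted to $\Delta_1, {!}A_1, \Delta_2' \to C$, and we must cut $\Pi \to {!}A_1$ into both copies. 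This requires that $\Pi$ itself be of the form ${!}\Gamma$ — which it is, since in $\ELs$ a sequent with succedent ${!}A_1$ must (by Lemma~\ref{Lm:ELsRestr}) have an antecedent that is {\em not} purely bang-formulae only if ${!}A_1$ already sits in it; but more carefully, since $\ELs$ has no $(\to{!})$ rule, the derivation of $\Pi \to {!}A_1$ must end in a structural rule or $({!}\to)$ tracing back to an axiom ${!}A_1 \to {!}A_1$, so $\Pi$ consists entirely of bang-formulae. Hence duplicating $\Pi$ and then contracting the copies with $(\CONTR)$/$(\EXC)$ is legitimate, and the resulting cuts are on the same formula ${!}A_1$ but against a strictly shorter right derivation, so $h$ decreases. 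The subcase where the right premise ends in $({!}\to)$ on ${!}A_1$, turning ${!}A_1$ into $A_1$ in the antecedent, is reduced to a cut on the smaller formula $A_1$ — here one must confirm that the $({!}\to)$ side condition (a non-bang-formula elsewhere in the antecedent) transfers to whatever $({!}\to)$ or division rule is needed afterwards, which again follows from Lemma~\ref{Lm:ELsRestr} and Lemma~\ref{Lm:nonbang_inherit}. Throughout, the only truly non-routine checks are these side-condition preservations; the combinatorics of the induction is exactly as in the cut-elimination proof for $\ELS$.
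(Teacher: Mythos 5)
Your overall skeleton (double induction on the complexity of the cut formula and on the sum of the premise heights, with the extra burden of checking that every reduction preserves the non-bang side conditions) matches the paper's proof, and your treatment of the commutative cases and of the principal cases for $\SL$ and $\BS$ is essentially what the paper does. The gap is in the case you yourself single out as the main obstacle. You claim that, because $\ELs$ has no $(\to{!})$ rule, a derivation of $\Pi \to {!}A_1$ must trace back to the axiom ${!}A_1 \to {!}A_1$ using only structural rules and $({!}\to)$, so that $\Pi$ consists entirely of bang-formulae and can be duplicated and then re-contracted in the $(\CONTR)$ subcase. That is false: the rules $(\SL\to)$ and $(\BS\to)$ also preserve the succedent, so for instance
$$
\infer[(\SL\to)]{({!}A_1)\SL p,\ p \to {!}A_1}{p \to p & {!}A_1 \to {!}A_1}
$$
is a legitimate $\ELs$-derivation whose antecedent contains non-bang-formulae. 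For such a $\Pi$ your contraction reduction produces $\Delta_1, \Pi, \Pi, \Delta_2 \to C$, and the two copies of $\Pi$ cannot be contracted back into one, since $(\CONTR)$ applies only to ${!}$-formulae. (Lemma~\ref{Lm:ELsRestr} gives the converse implication --- an all-bang antecedent forces a bang succedent --- not the direction you need.)

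The correct way out, and the point the paper's proof turns on, is that the principal case for ${!}$ never has to be confronted at all: since $\ELs$ has no $(\to{!})$ rule, no rule ever introduces a formula of the form ${!}A_1$ into a succedent, so the last rule of the left premise $\Pi \to {!}A_1$ is never principal for the cut formula. Hence when $A = {!}A_1$ you are always either in the axiom case or in the commutative case with respect to the \emph{left} premise, and you push the cut upward through the left derivation (decreasing its height while keeping the cut formula and the right premise fixed) until the left premise becomes the axiom ${!}A_1 \to {!}A_1$ and the cut vanishes. This removes any need to duplicate $\Pi$, and it also disposes of your $({!}\to)$-principal subcase, where the step ``reduce to a cut on $A_1$'' would require extracting $\Pi \to A_1$ from $\Pi \to {!}A_1$, which is likewise unavailable. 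With that correction the rest of your argument --- in particular the side-condition bookkeeping via Lemma~\ref{Lm:nonbang_inherit} --- goes through as in the paper.
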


\begin{proof}
We proceed by double induction. We consider a number of cases, and in
each of them the cut either disappears, or is replaced by cuts with simpler
cut formulae ($A$), or is replaced by a cut for which the depth of at least
one derivation tree of a premise ($\Pi \to A$ or $\Delta_1, B, \Delta_2 \to C$) is less
than for the original cut, and the other premise derivation and the cut formula
remain the same. Thus by double induction (on the outer level---on the complexity
of $A$, on the inner level---on the sum of premise derivation tree depths) we
get rid of the cut.

{\bf Case~1:} One of the premises of the cut rule is the axiom (a sequent of
the form $A \to A$). Then the cut disappears, since its other premise concides
with the goal sequent.

{\bf Case~2:} $A$ is not the formula that is introduced by the lowermost rule in
the derivation of one of the premises of the cut. (The term {\em 
``formula introduced by a rule''} here means the following: rules $(\SL\to)$, $(\BS\to)$, \mbox{$(\to\SL)$,}
$(\to\BS)$, $({!}\to)$, and $(\to{!})$ introduce the formula that includes the new
connective; $(\WEAK)$ and $(\CONTR)$ introduce ${!}A$ involved in these rules;
$(\EXC_1)$ and $(\EXC_2)$ do not introduce anything.)

In this case $(\CUT)$ can be interchanged
with that lowermost rule. Many subcases arise here, depending on the particular form
of the rule interchanged with $(\CUT)$, but they are all handled similarly. Below we show
only the most interesting situations,
when we interchange $({!}\to)$ or $(\to\SL)$ ($(\to\BS)$ is symmetric) with $(\CUT)$.
In these transformations Lambek's restriction imposed on these rules could potentially get violated 
after the exchange with $(\CUT)$ (we show that this does not happen).

{\it Transformation~1.}
$$
\infer[(\CUT)]
{\Delta_1, \Pi', {!}D, \Pi'', \Delta_2 \to C}
{\infer[({!}\to)]{\Pi', {!}D, \Pi'' \to A}{\Pi', D, \Pi'' \to A} &
\Delta_1, A, \Delta_2 \to C}
$$
\centerline{\raisebox{10pt}{\turnbox{270}{$\leadsto$}}}
$$
\infer[({!}\to)]
{\Delta_1, \Pi', {!}D, \Pi'', \Delta_2 \to C}
{\infer[(\CUT)]{\Delta_1, \Pi', D, \Pi'', \Delta_2 \to C}
{\Pi', D, \Pi'' \to A &
\Delta_1, A, \Delta_2 \to C}}
$$

{\it Transformation~2.}
$$
\infer[(\CUT)]
{\Delta_1, \Pi, \Delta'_2, {!}D, \Delta''_2 \to C}
{\Pi \to A & 
\infer[({!}\to)]{\Delta_1, A, \Delta'_2, {!}D, \Delta''_2 \to C}
{\Delta_1, A, \Delta'_2, D, \Delta''_2 \to C}}
$$
\centerline{\raisebox{10pt}{\turnbox{270}{$\leadsto$}}}
$$
\infer[({!}\to)]
{\Delta_1, \Pi, \Delta'_2, {!}D, \Delta''_2 \to C}
{\infer[(\CUT)]{\Delta_1, \Pi, \Delta'_2, D, \Delta''_2 \to C}
{\Pi \to A & \Delta_1, A, \Delta'_2, D, \Delta''_2 \to C}}
$$

{\it Transformation~3.}
$$
\infer[(\CUT)]
{\Delta_1, \Pi, \Delta_2 \to C_2 \SL C_1}
{\Pi \to A &
\infer[(\to\SL)]
{\Delta_1, A, \Delta_2 \to C_2 \SL C_1}
{\Delta_1, A, \Delta_2, C_1 \to C_2}}
$$
\centerline{\raisebox{10pt}{\turnbox{270}{$\leadsto$}}}
$$
\infer[(\to\SL)]
{\Delta_1, \Pi, \Delta_2 \to C_2 \SL C_1}
{\infer[(\CUT)]{\Delta_1, \Pi, \Delta_2, C_1 \to C_2}
{\Pi \to A & \Delta_1, A, \Delta_2, C_1 \to C_2}}
$$

In Transformation~1, the existence of a non-bang-formula in $\Pi'$ or $\Pi''$ implies
its existence in the larger context $\Delta_1, \Pi', \Pi'', \Delta_2$, thus application
of $({!}\to)$ is legal. For Transformations~2 and~3, if the non-bang-formula guaranteed
by Lambek's restriction (which was indeed valid before the transformation) 
is $A$ itself, then a non-bang-formula
also appears in $\Pi$ by Lemma~\ref{Lm:nonbang_inherit}. Otherwise we can take the
same non-bang-formula as before the transformation.

{\bf Case 3:} $A$ is introduced by the lowermost rules both into
$\Pi \to A$ and into $\Delta_1, A, \Delta_2 \to C$.
Note that in $\ELs$ there is no rule that introduces a formula
of the form ${!}E$ to the succedent. Therefore
$A$ is either of the form $E \SL F$, or $F \BS E$. We consider only
the former, the latter is handled symmetrically. The derivation
is transformed in the following way:

{\it Transformation~4.}
$$
\infer[(\CUT)]
{\Delta_1, \Pi, \Gamma, \Delta_2 \to C}
{\infer[(\to\SL)]{\Pi \to E \SL F}{\Pi, F \to E}
 &
\infer[(\SL\to)]{\Delta_1, E \SL F, \Gamma, \Delta_2 \to C}
{\Gamma \to F & \Delta_1, E, \Delta_2 \to C}}
$$
\centerline{\raisebox{10pt}{\turnbox{270}{$\leadsto$}}}
$$
\infer[(\CUT)]
{\Delta_1, \Pi, \Gamma, \Delta_2 \to C}
{\infer[(\CUT)]{\Pi, \Gamma \to E}{\Gamma \to F & \Pi, F \to E}
& \Delta_1, E, \Delta_2 \to C}
$$

The new cut formulae, $E$ and $F$, are simpler than the original one, $E \SL F$.
\qed
\end{proof}

\section{Substitution Issues}

$\ELs$ inherits a bang-free substitution lemma:
derivability of a sequent is preserved if we replace
all occurrences of a variable $q$ with a formula $Q$ without ${!}$.

In the general case, however, 
$\ELs$ does not respect type substitution. For instance,
$p, {!}(p \BS q) \to q$ is derivable in $\ELs$, but
${!}r, {!}({!}r \BS q) \to q$ is not. Unfortunately,
this is not just a problem with this particular system,
but a general issue: as we show below, in the presence
of the exponential modality
any system enjoying admissibility of $(\CUT)$ and
general substitution lemma necessarily violates Lambek's restriction.

Let $\ELd$ be an arbitrary calculus, in the same language
as $\ELS$, satisfying the properties below.
(Note that these properties do not {\em define} the calculus in a unique way---we
rather talk about {\em a family} of possible `good' extensions of the Lambek
calculus.)

\begin{enumerate}
\item {\em Extension.} If a sequent is derivable in $\LL$, then it is derivable
in $\ELd$.
\item {\em Cut.} The cut rule of the form
$$
\infer[(\CUT)]
{\Delta_1, \Pi, \Delta_2 \to C}{\Pi \to A & \Delta_1, A, \Delta_2 \to C}
$$
is admissible in $\ELd$.
\item {\em Substitution.}
The following rule is admissible in $\ELd$:
$$
\infer[(\SUBST)]
{\Pi[q := Q] \to A[q := Q]}
{\Pi \to A}
$$
Here $q$ is a variable, $Q$ is a formula (possibly with ${!}$), and $[q := Q]$ denotes
 substitution of $Q$ for $q$.
\item {\em Monotonicity.}
The following rules are admissible in $\ELd$:
$$
\infer[(\MON_{\SL})]{B_1 \SL A_2 \to B_2 \SL A_1}{A_1 \to A_2 & B_1 \to B_2}
$$ $$
\infer[(\MON_{\BS})]{A_2 \BS B_1 \to A_1 \BS B_2}{A_1 \to A_2 & B_1 \to B_2}
$$

\item {\em Weakening, contraction, and permutation.}
The following rules are admissible in $\ELd$:
\begin{center}
\begin{tabular}{c@{\qquad}c}
$\infer[(\WEAK)]{!A,\Delta \to B}{\Delta \to B}$ &
$\infer[(\CONTR)]{!A,\Delta \to B}{!A, !A, \Delta\to B}$\\[3pt]
$\infer[(\EXC_1)]{\Delta_1,!A,B,\Delta_2 \to C}
{\Delta_1,B,!A,\Delta_2 \to C}$ &
$\infer[(\EXC_2)]{\Delta_1,A,!B,\Delta_2 \to C}
{\Delta_1,!B,A, \Delta_2 \to C}$
\end{tabular}
\end{center}

\item The rules
$$\infer[(\SL\to)]{\Delta_1, B \SL A, \Pi, \Delta_2 \to C}
{\Pi \to A & \Delta_1, B, \Delta_2 \to C}
\raisebox{7pt}{\mbox{\qquad and \qquad}}
\infer[(\BS\to)]{\Delta_1, \Pi, A \BS B, \Delta_2 \to C}
{\Pi \to A & \Delta_1, B, \Delta_2 \to C}
$$
are admissible in $\ELd$ without restrictions.

\item If $\Pi$ contains a formula without occurrences of ${!}$ (and therefore
is non-empty) and $B$ does not contain occurrences of ${!}$, then the rules
$$\infer[(\to\SL)]{\Pi \to B \SL A}{\Pi, A \to B}
\raisebox{7pt}{\mbox{\qquad and \qquad}}
\infer[(\to\BS)]{\Pi \to A \BS B}{A, \Pi \to B}
$$
are admissible in $\ELd$.
\end{enumerate}

Note that $(\CUT)$, $(\SUBST)$, and $(\MON)$ are admissible in $\LL$, therefore we want
them to keep valid in the extension. Weakening, contraction, and permutation are basic
rules for the exponential. Finally, the last two properties ensure that the version
of Lambek's restriction used in $\ELd$ does not forbid Lambek derivations in the presence
of the exponential modality.

Also note that by substitution we get the axiom $A \to A$ for arbitrary $A$, possibly
with occurrences of ${!}$.

Unfortunately, any calculus $\ELd$ with these 7 properties necessarily
violates Lambek's restriction:

\begin{lemma}\label{Lm:ELd}
If $\ELd$ satisfies properties 1--7, $A$ and $B$ do not contain \ ${!}$, and
$\ELd \vdash {!}q, A \to B$, then $\ELd \vdash {!}q \to A \BS B$ and 
\mbox{$\ELd \vdash
{!}q \to B \SL A$}.
\end{lemma}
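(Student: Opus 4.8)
The plan is to derive $\ELd\vdash{!}q\to A\BS B$ and $\ELd\vdash{!}q\to B\SL A$ in parallel. The obvious move --- apply $(\to\BS)$ to the hypothesis to detach $A$ --- is exactly what property~7 blocks, since it would leave the antecedent consisting only of ${!}q$, with no ${!}$-free formula. Instead I would build the two conclusions from a Lambek theorem pushed through Substitution, a harmless permutation of the hypothesis, and then Monotonicity followed by Cut. The relevant Lambek theorems are, for distinct fresh variables $p,r$: $\LL\vdash p\to r\SL(p\BS r)$ and $\LL\vdash p\to(r\SL p)\BS r$. Indeed $p,p\BS r\to r$ is derivable by one $(\BS\to)$ on the axioms $p\to p$ and $r\to r$, and then $(\to\SL)$ is legal under Lambek's restriction because the remaining antecedent is the single formula $p$; dually $r\SL p,p\to r$ is derivable and $(\to\BS)$ applies. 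By the Extension property and two applications of Substitution, $p:={!}q$ followed by $r:=B$, these yield $\ELd\vdash{!}q\to B\SL({!}q\BS B)$ and $\ELd\vdash{!}q\to(B\SL{!}q)\BS B$. Here Substitution acts on a bona fide $\LL$-derivation, so the bang surfaces only in the conclusion and Lambek's restriction is never invoked inside.

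Next I rearrange the hypothesis $\ELd\vdash{!}q,A\to B$. Property~7 licenses $(\to\BS)$ here: the remaining antecedent is the single ${!}$-free formula $A$, and the succedent $B$ is ${!}$-free; so $\ELd\vdash A\to{!}q\BS B$. Permuting the bang-formula past $A$ with $(\EXC_2)$ gives $\ELd\vdash A,{!}q\to B$, and then $(\to\SL)$ --- again licensed by property~7 --- gives $\ELd\vdash A\to B\SL{!}q$.

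To finish, apply $(\MON_{\BS})$ to $A\to B\SL{!}q$ and the axiom $B\to B$, obtaining $(B\SL{!}q)\BS B\to A\BS B$; cutting against $\ELd\vdash{!}q\to(B\SL{!}q)\BS B$ yields $\ELd\vdash{!}q\to A\BS B$. Symmetrically, $(\MON_{\SL})$ applied to $A\to{!}q\BS B$ and $B\to B$ gives $B\SL({!}q\BS B)\to B\SL A$, and a cut against $\ELd\vdash{!}q\to B\SL({!}q\BS B)$ gives $\ELd\vdash{!}q\to B\SL A$.

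I expect the only real obstacle to be the first step: recognising that the right Lambek theorems to substitute into are $p\to r\SL(p\BS r)$ and $p\to(r\SL p)\BS r$ --- the point being that a formula always ``divides into'' itself divided by itself, and that this survives Lambek's restriction in $\LL$ because the critical antecedents are single formulae. Everything after that is a mechanical monotonicity-and-cut chain; the only side conditions to check are those of property~7 at its two uses (a ${!}$-free formula present in the antecedent, and a ${!}$-free succedent), both of which hold because $A$ and $B$ contain no ${!}$. Note also that nothing in the argument depends on whether $q$ occurs in $A$ or $B$, since $p$ and $r$ were taken fresh.
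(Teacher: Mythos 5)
Your proof is correct; all the side conditions check out (both applications of property~7 have the ${!}$-free formula $A$ as the surviving antecedent and the ${!}$-free $B$ as the numerator of the premise, and the monotonicity rules are instantiated with the right variance). It is, however, a genuinely different route from the paper's, even though both arguments share the same skeleton: a final cut against a sequent between two division types obtained by monotonicity. The paper's intermediate formula is $(A \SL {!}q)\BS B$: the sequent ${!}q \to (A\SL{!}q)\BS B$ is extracted from the hypothesis by introducing a placeholder variable, applying $(\to\BS)$, substituting ${!}q$ for the placeholder, and then \emph{contracting} the two resulting copies of ${!}q$, while the companion sequent $(A\SL{!}q)\BS B \to A\BS B$ rests on $A \to A\SL{!}q$, which needs \emph{weakening}. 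Your intermediate formula is the type-raising $(B\SL{!}q)\BS B$: the sequent ${!}q \to (B\SL{!}q)\BS B$ is a pure $\LL$ theorem pushed through substitution and does not touch the hypothesis at all, and the hypothesis enters only via $A \to B\SL{!}q$, which property~7 yields directly. As a result your argument uses only properties 1--4, 7, and the permutation half of property~5; it needs neither $(\WEAK)$ nor $(\CONTR)$ nor property~6. This is worth noting in connection with the discussion after Theorem~\ref{Th:ELd}, where the question is raised whether the collapse of Lambek's restriction is ``due to the weakening rule'': your version of the lemma shows the key step survives the removal of both weakening and contraction (weakening remains needed only for the axiom case in the induction proving Theorem~\ref{Th:ELd} itself). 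A second minor gain: since you substitute into fresh variables $p,r$ of a closed Lambek theorem, the argument is insensitive to whether $q$ occurs in $A$ or $B$, whereas the substitution step in the paper's derivation tacitly requires $q$ not to occur there.
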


\begin{proof}
$$
\infer[(\CUT)]{{!}q \to A \BS B}
{
\infer[(\CONTR)]{{!}q \to (A \SL {!}q) \BS B}
{\infer[(\SUBST)]{{!}q, {!}q \to (A \SL {!}q) \BS B}
{\infer[(\to\BS)]{q, {!}q \to (A \SL q) \BS B}
{\infer[(\SL\to)]{A \SL q, q, {!}q \to B}
{q \to q & \infer[(\EXC)]{A, {!}q \to B}{{!}q, A \to B}}}}}
& 
\infer[(\MON)]{(A \SL {!}q) \BS B \to A \BS B}
{\infer[(\to\SL)]{A \to A \SL {!}q}{
\infer[(\WEAK)]{A, {!}q \to A}{A \to A}} & B \to B}}
$$
The $\SL$ case is symmetric.
\qed
\end{proof}

\begin{theorem}\label{Th:ELd}
If $\Pi \to B$ is derivable in $\LLS$, and $\ELd$ satisfies properties 1--7,
then ${!}q, \Pi \to B$ is derivable in $\ELd$.
\end{theorem}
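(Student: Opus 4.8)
The plan is to proceed by induction on the derivation of $\Pi \to B$ in $\LLS$, strengthening the statement to keep track of the succedent. Since $\LLS$ has the subformula property, every sequent appearing in such a derivation consists of subformulae of $\Pi$ and $B$; in particular none of them contains ${!}$. The natural induction hypothesis is: \emph{for every sequent $\Sigma \to C$ derivable in $\LLS$ (with $\Sigma$ possibly empty and $C$ ${!}$-free), the sequent ${!}q, \Sigma \to C$ is derivable in $\ELd$.} The base case is an axiom $C \to C$, for which ${!}q, C \to C$ follows by $(\WEAK)$ from the axiom $C \to C$ (available in $\ELd$ by property~1, since $C \to C$ is derivable in $\LL$).

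For the inductive step I would go through the four rules of $\LLS$. The two rules $(\BS\to)$ and $(\SL\to)$ are the easy ones: property~7 gives these rules in $\ELd$ without any restriction, and one simply applies the induction hypothesis to the premise $\Delta_1, B, \Delta_2 \to C$ that carries the ``global'' context, placing the ${!}q$ in front (after possibly using $(\EXC)$ to move it past formulae, though if we always keep ${!}q$ leftmost this is unnecessary), while the other premise $\Pi \to A$ is handled by the original $\LLS$-subderivation together with property~1. The delicate rules are $(\to\BS)$ and $(\to\SL)$, because in $\LLS$ the premise $A, \Pi \to B$ (resp.\ $\Pi, A \to B$) may have an empty antecedent $\Pi$, and property~8 only licenses $(\to\BS)$, $(\to\SL)$ in $\ELd$ when the antecedent already contains a ${!}$-free formula. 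This is exactly the situation Lemma~\ref{Lm:ELd} is designed to rescue.

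So for the case where the last rule is $(\to\BS)$, deriving $\Pi \to A \BS B$ from $A, \Pi \to B$: by the induction hypothesis ${!}q, A, \Pi \to B$ is derivable in $\ELd$. If $\Pi$ is non-empty it contains a ${!}$-free formula, so property~8 applies directly (after $(\EXC)$ moving ${!}q$ to the right place, or simply noting that $(\to\BS)$ introduces on the left of the antecedent and $A,\Pi$ already has the required non-bang formula, namely $A$) — actually $A$ itself is ${!}$-free, so property~8 applies to ${!}q, A, \Pi \to B$ regardless of whether $\Pi$ is empty, yielding ${!}q, \Pi \to A \BS B$. Wait: property~8 requires the antecedent of the \emph{conclusion} premise to contain a ${!}$-free formula; the premise is ${!}q, A, \Pi \to B$ and we are abstracting $A$, so we need ${!}q,\Pi$ — which is not guaranteed ${!}$-free when $\Pi$ is empty. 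That is precisely where Lemma~\ref{Lm:ELd} enters: when $\Pi = \Lambda$, we have ${!}q, A \to B$ derivable in $\ELd$ with $A, B$ both ${!}$-free, and Lemma~\ref{Lm:ELd} gives directly ${!}q \to A \BS B$ and ${!}q \to B \SL A$. The case $(\to\SL)$ is symmetric, and when $\Pi \neq \Lambda$ one uses property~8 in the straightforward way.

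The main obstacle, and the only real content, is therefore bookkeeping around the empty-antecedent subcase of the right-division rules — making sure that whenever $\LLS$ produces a sequent with empty antecedent, the corresponding $\ELd$-sequent has the extra ${!}q$ and is in the shape required to invoke Lemma~\ref{Lm:ELd} (i.e.\ exactly one bang-formula ${!}q$, and everything else ${!}$-free, which the subformula property guarantees). A secondary, purely cosmetic point is keeping the position of ${!}q$ canonical (leftmost) throughout, which property~5 ($(\EXC)$) handles; with that convention the $(\BS\to)/(\SL\to)$ cases need no permutations at all and the argument closes cleanly.
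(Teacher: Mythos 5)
Your overall strategy is exactly the paper's: induct on the $\LLS$-derivation, add ${!}q$ by $(\WEAK)$ at the axioms, push it through the rules, and invoke Lemma~\ref{Lm:ELd} precisely when $(\to\BS)$ or $(\to\SL)$ is applied with empty $\Pi$. Your handling of the right-division rules, including the self-correction about where the ${!}$-free formula must sit for property~7 (which you misnumber as ``property~8''---there are only seven properties) and the reduction of the empty-antecedent subcase to Lemma~\ref{Lm:ELd}, is correct and matches the intended argument.

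There is, however, one step that fails as written: your treatment of the minor premise $\Pi \to A$ in the $(\BS\to)/(\SL\to)$ cases ``by the original $\LLS$-subderivation together with property~1.'' Property~1 only imports $\LL$-derivable sequents into $\ELd$, and an $\LLS$-subderivation of a minor premise need not be an $\LL$-derivation: e.g.\ in the $\LLS$-derivation of $(q \BS q) \BS p \to p$ the minor premise of the final $(\BS\to)$ is $\to q \BS q$, which has an empty antecedent and is not $\LL$-derivable, so property~1 gives you nothing there. The repair is to apply your (already suitably strengthened) induction hypothesis to the minor premise as well, obtaining ${!}q, \Pi \to A$, use the unrestricted rule of property~6 with that as the minor premise, and then eliminate the resulting second copy of ${!}q$ with $(\EXC)$ followed by $(\CONTR)$ from property~5. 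This also means your closing remark that the $(\BS\to)/(\SL\to)$ cases ``need no permutations at all'' is wrong: permutation and contraction are genuinely needed in these cases, not just cosmetically for the right rules. With that correction the proof goes through and coincides with the paper's.
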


\begin{proof}
Induction on derivation length. For the axiom case we use the weakening rule
to add ${!}q$. Applications of Lambek rules are translated straightforwardly;
the only non-trivial case is $(\to\SL)$ and $(\to\BS)$ with an empty $\Pi$,
where we use Lemma~\ref{Lm:ELd}.
\qed
\end{proof}

One could think that this effect is due to the weakening rule
(this rule allows forcing the antecedent
to be non-empty). However, in the fragment with only one variable
a result like Theorem~\ref{Th:ELd} can be achieved without weakening.
Note that, in the view of~\cite{KanovichOneVar}, \cite{Kanovich95}, \cite[Chapter~3]{Hendriks},
\cite{Metayer}, and~\cite{KuznIGPL},
the one-variable fragment of the Lambek calculus is as powerful as
the full calculus with a countable set of variables.

\begin{theorem}\label{Th:ELdd}
Let $\ELdd$ satisfy properties 1--4, 6, and 7. Let also contraction and permutation (but
not weakening) rules
be admissible in $\ELdd$, and, in addition, let $\ELdd$ include the $({!}\to)$ rule of the form
$$
\infer[({!}\to),]
{\Delta_1, {!}A, \Delta_2 \to C}
{\Delta_1, A, \Delta_2 \to C}
$$
if $\Delta_1$ or $\Delta_2$ contains a formula without occurrences of \ ${!}$ (this is the
strongest version of Lambek's restriction that could be imposed on this rule).
In this case, if $\Pi$ and $B$ contain only one variable $p$ and do not contain occurrences of \ ${!}$,
and $\Pi \to B$ is derivable in $\LLS$, then $\Pi, {!}(p \BS p) \to B$ is derivable in $\ELdd$.
\end{theorem}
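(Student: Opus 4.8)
The proof of Theorem~\ref{Th:ELdd} should be a variant of the argument for Theorem~\ref{Th:ELd}, with the crucial difference that weakening is no longer available, so the formula ${!}(p\BS p)$ (which is provably a ``unit-like'' formula, since $p\BS p$ is a left identity for $\BS$ up to derivability, analogous to how ${!}q$ behaved under weakening in Lemma~\ref{Lm:ELd}) must be threaded through the whole induction rather than merely tacked on at the axiom. First I would establish the one-variable analogue of Lemma~\ref{Lm:ELd}: namely, that if $A$ and $B$ are ${!}$-free and contain only $p$, and $\ELdd\vdash {!}(p\BS p),A\to B$, then $\ELdd\vdash {!}(p\BS p)\to A\BS B$ (and symmetrically for $\SL$). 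The derivation should mimic the displayed proof of Lemma~\ref{Lm:ELd}, using $(\CONTR)$ to duplicate ${!}(p\BS p)$, a substitution $[q:=p\BS p]$ inside the generic construction, permutation $(\EXC)$, $(\SL\to)$ against an axiom, and $(\to\BS)$; the only place where weakening was used in Lemma~\ref{Lm:ELd} --- to derive $A\to A\SL{!}q$ from $A\to A$ --- must be replaced. Here I would instead use that $A\to A\SL(p\BS p)$ is derivable in $\LL$ itself (hence in $\ELdd$ by Extension): indeed $A,p\BS p\to A$ is a Lambek-derivable sequent whenever $A$ contains $p$, since we can derive $p\to p$ and then $p\BS p, p\to p$ is Lambek-valid... more carefully, one wants $A,(p\BS p)\to A$, which follows from the $\LL$-derivability of $(p\BS p),p\to p$ composed appropriately; the cleanest route is to note $\LL\vdash A\to A\SL(p\BS p)$ reduces by $(\to\SL)$ to $\LL\vdash A,(p\BS p)\to A$, and this holds because $A$ contains an occurrence of $p$ at which we can absorb $(p\BS p)$ via cut with the $\LL$-theorem $p,(p\BS p)\to p$... wait, that is wrong associativity; rather use $(p\BS p)\BS A$-style manipulation or simply appeal to the well-known fact that $p\BS p$ acts as a removable left-adjacent factor next to any $p$-containing type in $\LL$.

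**Key steps, in order.** (1)~Prove $\ELdd\vdash A,(p\BS p)\to A$ for every ${!}$-free, one-variable type $A$, by induction on $A$ using the admissible $(\SL\to)$ and $(\BS\to)$ of property~7 and the $\LL$-Extension property~1; this is the replacement for weakening and is the technical heart. (2)~Derive the ``banged unit'' lemma: from $\ELdd\vdash {!}(p\BS p),A\to B$ conclude $\ELdd\vdash {!}(p\BS p)\to A\BS B$ and $\ELdd\vdash {!}(p\BS p)\to B\SL A$, by adapting Transformation/derivation of Lemma~\ref{Lm:ELd} with $q$ replaced by $p\BS p$ inside the $(\SUBST)$ and with step~(1) in place of $(\WEAK)$. (3)~Run the induction on the length of the $\LLS$-derivation of $\Pi\to B$, carrying the invariant that $\Pi,{!}(p\BS p)\to B$ is $\ELdd$-derivable (placing ${!}(p\BS p)$ at the right end; by permutation its position is immaterial, and by $(\CONTR)$ multiplicity is immaterial). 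For an $\LLS$-axiom $A\to A$ we invoke step~(1). For the $(\BS\to),(\SL\to)$ rules of $\LLS$ we use property~7 directly, routing the carried ${!}(p\BS p)$ into whichever premise keeps the side conditions happy --- here the new $({!}\to)$ with its weakened-Lambek restriction is never needed going downward, but is available if a premise would otherwise have an all-${!}$ antecedent. For $(\to\BS),(\to\SL)$ of $\LLS$ with $\Pi$ nonempty we use property~8 directly; with $\Pi$ empty (the genuinely non-trivial case, e.g.\ deriving $\to p\BS p$) we invoke step~(2).

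**The main obstacle.** I expect step~(1) to be the sticking point. In the weakening-free setting, making ${!}(p\BS p)$ disappear next to an arbitrary ${!}$-free one-variable type $A$ requires genuinely using the $\LL$-derivability of $p\BS p$-absorption, and one must be careful that the induction on $A$ goes through both division connectives with the correct handedness --- $(p\BS p)$ is a \emph{left} unit, so it cooperates with $\BS$ on one side and needs a transposition argument (or the mirror-image fact about $p\SL p$) for the other side. There is a real subtlety about \emph{where} in $A$ the variable $p$ occurs and whether $(p\BS p)$ can be slid to it; the honest statement is that $\LL\vdash (p\BS p), A\to A$ and $\LL\vdash A,(p/p)\to A$ hold for all $p$-containing $A$, and one should pick the unit $p\BS p$ versus $p\SL p$ to match the side on which it is being cancelled, which is why the theorem is stated with ${!}(p\BS p)$ appended on the \emph{right} of $\Pi$. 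A second, milder obstacle is bookkeeping in step~(3): one must check that in every $\LLS$-rule application the side conditions of the restricted $(\to\BS),(\to\SL)$ of property~8 and of the weakened $({!}\to)$ are met for the \emph{$\ELdd$}-derivation being built, which is exactly where the carried non-${!}$ formula (some $p$-containing subtype of $\Pi$, or $p\BS p$ itself) earns its keep; but since $\Pi$ and $B$ are ${!}$-free and contain $p$, there is always a ${!}$-free formula around, so these checks are routine.
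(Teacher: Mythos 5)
Your overall architecture matches the paper's: a key lemma replacing the use of $(\WEAK)$ in Lemma~\ref{Lm:ELd}, then an induction on the $\LLS$-derivation whose only non-trivial case is $(\to\SL)/(\to\BS)$ with empty antecedent, handled by the adapted derivation with $[q := p\BS p]$. But your step~(1) contains a genuine gap: the claim that $\LL \vdash A, (p\BS p) \to A$ for every ${!}$-free one-variable $A$ (your ``well-known fact that $p\BS p$ acts as a removable left-adjacent factor next to any $p$-containing type'') is false. Already for $A = p\SL p$ the sequent $p\SL p,\, p\BS p \to p\SL p$ is underivable: applying $(\to\SL)$ reduces it to $p\SL p,\, p\BS p,\, p \to p$, and every left-rule decomposition of that sequent requires one of $p\SL p \to p$, $p\BS p \to p$, or $p\BS p, p \to p$, none of which is derivable (the $p\BS p$ always ends up leftmost with nothing to consume). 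The unit $p\BS p$ cancels only when it sits immediately to the right of an occurrence of $p$ itself, and an unbanged formula cannot be moved there.

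This is precisely why the paper's Lemma~\ref{Lm:ELdd} keeps the exponential on $(p\BS p)$ throughout the induction on $A$: the statement proved is $A,\, {!}(p\BS p) \to A$, and in the case $A = A_2\SL A_1$ the derivation applies $(\to\SL)$ to reach $A_2\SL A_1,\, {!}(p\BS p),\, A_1 \to A_2$ and then uses the permutation rule $(\EXC)$ --- available only for ${!}$-prefixed formulae --- to slide ${!}(p\BS p)$ past $A_1$ before applying $(\SL\to)$ and the induction hypothesis $A_2, {!}(p\BS p) \to A_2$; the bang is discharged by $({!}\to)$ only at the base case $A = p$, where $\Delta_1 = p$ satisfies the restriction on $({!}\to)$. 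So the fix is not to find a cleverer $\LL$-derivation (none exists) but to state and prove the lemma with ${!}(p\BS p)$ rather than $p\BS p$, carrying the bang so that permutation does the sliding you correctly identified as the obstacle. With that corrected lemma, your steps (2) and (3) go through essentially as in the paper.
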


\begin{proof}
First we state an easy technical lemma:
\begin{lemma}\label{Lm:ELdd}
If $A$ contains only one variable $p$ and no occurrences of \ ${!}$, then\linebreak $\ELdd \vdash A, {!}(p \BS p) \to A$.
\end{lemma}

\begin{proof}
Induction on the complexity of $A$. If $A = p$, then $p, {!}(p \BS p) \to p$ is derived as
follows:
$$
\infer[({!}\to)]
{p, {!}(p \BS p) \to p}
{\infer[(\BS\to)]{p, p \BS p \to p}
{p \to p & p \to p}}
$$

For $A = A_2 \SL A_1$ and $A = A_1 \BS A_2$ we use the following derivations:
$$
\infer[(\to\SL)]
{A_2 \SL A_1, {!}(p \BS p) \to A_2 \SL A_1}
{\infer[(\EXC)]{A_2 \SL A_1, {!}(p \BS p), A_1 \to A_2}
{\infer[(\SL\to)]{A_2 \SL A_1, A_1, {!}(p \BS p) \to A_2}
{A_1 \to A_1 & A_2, {!}(p \BS p) \to A_2}}}
\qquad
\infer[(\to\BS)]
{A_1 \BS A_2, {!}(p \BS p) \to A_1 \BS A_2}
{\infer[(\BS\to)]{A_1, A_1 \BS A_2, {!}(p \BS p) \to A_2}
{A_1 \to A_1 & A_2, {!}(p \BS p) \to A_2}}
$$

$A_1 \to A_1$ is an axiom of $\LL$ (and, by property~1,
is derivable in $\ELdd$). $A_2, {!}(p \BS p) \to A_2$ is derivable
by induction hypothesis.
\qed
\end{proof}

Then we proceed by induction on derivation. The axiom $A \to A$ becomes a
derivable sequent $A, {!}(p \BS p) \to A$ (Lemma~\ref{Lm:ELdd}). Now the only non-trivial
case is to simulate $(\to\SL)$ and $(\to\BS)$ with an empty $\Pi$:

$$\small
\infer[(\CUT)]{{!}(p \BS p) \to A \BS B}
{
\infer[(\CONTR)]
{{!}(p \BS p) \to (A \SL {!}(p \BS p)) \BS B}
{
\infer[(\SUBST)]{{!}(p \BS p), {!}(p \BS p) \to (A \SL {!}(p \BS p)) \BS B}
{
\infer[(\to\BS)]{p, {!}(p \BS p) \to (A \SL p)\BS B}
{
\infer[(\SL\to)]{A \SL p, p, {!}(p \BS p) \to B}
{p \to p & A, {!}(p \BS p) \to B}
}
}
}
&
\infer[(\MON)]{(A \SL {!}(p \BS p)) \BS B \to A \BS B}
{\infer[(\to\SL)]{A \to A \SL {!}(p \BS p)}{\infer{A, {!}(p \BS p) \to A}{\mbox{by Lemma~\ref{Lm:ELdd}}} }& B \to B}
}
$$

The $(\to\SL)$ case is handled symmetrically.
\qed
\end{proof}

Theorems~\ref{Th:ELd} and~\ref{Th:ELdd} show that there is no way
to add the exponential modality to the Lambek calculus preserving
Lambek's restriction,
admissibility of $(\CUT)$, and the substitution property at the same
time.

In the next section we describe another yet extension of $\LL$ with
the exponential modality. This extension features a version Lambek's restriction,
admits substitution of formulae with ${!}$, but, on the other hand, only a limited
version of the cut rule.

\section{The 3rd Approach: $\ELm$}\label{S:ELm}

In order to restore type substitution as much as possible we
consider the third approach to imposing Lambek's restriction on $\ELS$.
The trade-off here is that the cut rule is going to be admissible only
in a limited form.

We present such a system in the form of {\em marked sequent calculus.}
A marked sequent is an expression of the form $\Pi \to A$, where
$A$ is a type and $\Pi$ is a sequence of pairs of the form
$\langle B, m\rangle$, written as $\mk{B}{m}$, where $B$ is a type and
$m \in \{ 0, 1 \}$ is the
{\em marking bit}. A pair
$\mk{B}{0}$ is called an {\em unmarked type}, 
and $\mk{B}{1}$ is called a {\em marked type}. The marking bits are utilized inside the derivation,
and in the end they are forgotten, yielding a sequent in the original sense.
If $\Gamma = \mk{B_1}{m_1}, \dots, \mk{B_k}{m_k}$, then by
${!}\Gamma$ we denote the sequence
$\mk{({!}B_1)}{m_1}, \dots, \mk{({!}B_k)}{m_k}$.

Lambek's restriction is now formulated as follows:
{\em every sequent should contain an unmarked type in the antecedent.}

The calculus $\ELm$ is defined in the following way:

$$
\mk{p}{0} \to p
$$

$$
\infer[(\to\SL)\mbox{, where $\Pi$ contains an 
unmarked type}]{\Pi \to B \SL A}{\Pi, \mk{A}{m} \to B}
$$

$$
\infer[(\to\BS)\mbox{, where $\Pi$ contains an
unmarked type}]{\Pi \to A \BS B}{\mk{A}{m}, \Pi \to B}
$$

$$
\infer[(\SL\to)]{\Delta_1, \mk{(B \SL A)}{m}, \Pi, \Delta_2 \to C}
{\Pi \to A & \Delta_1, \mk{B}{m}, \Delta_2 \to C}
\qquad
\infer[(\BS\to)]{\Delta_1, \Pi, \mk{(A \BS B)}{m}, \Delta_2 \to C}
{\Pi \to A & \Delta_1, \mk{B}{m}, \Delta_2 \to C}
$$

$$
\infer[({!}\to)\mbox{, where $\Delta_1, \Delta_2$ contains
an unmarked type}]{\Delta_1, \mk{(!A)}{1}, \Delta_2 \to B}{\Delta_1, \mk{A}{m}, 
\Delta_2 \to B}
$$

$$
\infer[(\to{!})]{{!}\Gamma, {!}\Delta \to {!}A}{{!}\Gamma, \Delta \to A}
\qquad
\infer[(\WEAK)]{\Delta_1, \mk{({!}A)}{1}, \Delta_2 \to A}
{\Delta_1, \Delta_2 \to A}
$$

$$
\infer[(\CONTR)]{\mk{({!}A)}{\min\{ m_1, m_2 \}}, \Delta \to B}
{\mk{({!}A)}{m_1}, \mk{({!}A)}{m_2}, \Delta \to B}
$$

$$
\infer[(\EXC_1)]{\Delta_1, \mk{({!}A)}{m_1}, \mk{B}{m_2}, \Delta_2 \to C}
{\Delta_1, \mk{B}{m_2}, \mk{({!}A)}{m_1}, \Delta_2 \to C}
\qquad
\infer[(\EXC_2)]{\Delta_1, \mk{A}{m_1}, \mk{({!}B)}{m_2}, \Delta_2 \to C}
{\Delta_1, \mk{({!}B)}{m_2}, \mk{A}{m_1}, \Delta_2 \to C}
$$

Recall that all proofs are cut-free.
Also note that in $\EL$ we use a stronger form of the $(\to{!})$ rule.
In $\ELS$ this new rule could be simulated by applying the $({!}\to)$ rule 
for all formulae in $\Delta$ and then using the original $(\to{!})$ rule,
but here the $({!}\to)$ rule will fail to satisfy the restriction.

The substitution property is now formulated as follows:

\begin{theorem}
Let $A[q := Q]$ (resp., $\Pi[q := Q]$)
be the result of substituting $Q$ for $q$ in type $A$ (resp., marked sequence $\Pi$). 
Then  $\ELm \vdash \Pi \to A$ implies
$\ELm \vdash \Pi[q := Q] \to A[q := Q]$.
\end{theorem}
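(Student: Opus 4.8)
The plan is to prove this by induction on the cut-free derivation of $\Pi \to A$ in $\ELm$, showing that each rule application survives the substitution $[q := Q]$. The substitution $[q := Q]$ acts on marked types componentwise, i.e.\ $\mk{B}{m}[q := Q] = \mk{(B[q:=Q])}{m}$, so marking bits are untouched. For the base case, the axiom $\mk{p}{0} \to p$ becomes $\mk{(p[q:=Q])}{0} \to p[q:=Q]$; if $p \ne q$ this is again the axiom, while if $p = q$ it becomes $\mk{Q}{0} \to Q$, which we must show is derivable in $\ELm$ for arbitrary $Q$ (possibly containing ${!}$). I would establish this as a preliminary lemma: $\ELm \vdash \mk{Q}{0} \to Q$ for every type $Q$, proved by induction on the structure of $Q$, using $(\to\SL)$, $(\to\BS)$, $(\SL\to)$, $(\BS\to)$ for the division cases and, crucially, the stronger $(\to{!})$ rule together with $({!}\to)$ for the case $Q = {!}Q'$ (note that $\mk{Q'}{0}$ being derivable, one applies $({!}\to)$ — legitimate here because the analogue of $\mk{Q'}{0}$ on the left still counts as an unmarked type — and then $(\to{!})$). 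One must check that in the division cases the newly created antecedent still contains an unmarked type, which it does because $\mk{Q}{0}$ or its descendant stays unmarked.

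For the inductive step, each rule of $\ELm$ is examined in turn. The key point is that substitution commutes with all the rule schemata: a rule instance on $\Pi \to A$ maps to a rule instance of the same rule on $\Pi[q:=Q] \to A[q:=Q]$, because substitution is a homomorphism with respect to ${\BS}$, ${\SL}$, and ${!}$, and it leaves the marking structure alone. Concretely, $(B \SL A)[q:=Q] = B[q:=Q] \SL A[q:=Q]$, so the principal formula of a $\SL$-rule is still a $\SL$-formula after substitution, with the right immediate subformulae; likewise $({!}A)[q:=Q] = {!}(A[q:=Q])$, so $({!}\to)$, $(\to{!})$, $(\WEAK)$, $(\CONTR)$, $(\EXC_1)$, $(\EXC_2)$ all map to instances of themselves. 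The induction hypothesis supplies derivability of the (substituted) premises, and we reassemble via the same rule.

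The main obstacle — really the only nonroutine point — is verifying that the side conditions enforcing Lambek's restriction are preserved under substitution. The rules $(\to\SL)$, $(\to\BS)$, and $({!}\to)$ require the relevant antecedent fragment to contain an \emph{unmarked} type. Since substitution does not change any marking bit and does not delete or reorder antecedent entries, the position that was unmarked before substitution is still present and still unmarked afterwards; hence the side condition survives verbatim. I would state this observation explicitly for each of the three constrained rules. The $(\to{!})$ and $(\CONTR)$ rules have no unmarkedness side condition (the former produces a sequent $!\Gamma, !\Delta \to {!}A$ whose antecedent may legitimately be empty, and the latter just takes the minimum of the two bits), so they need no special attention beyond the homomorphism remark. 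Assembling these checks completes the induction and the theorem.
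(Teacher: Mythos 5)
Your overall strategy coincides with the paper's: prove the auxiliary lemma $\ELm \vdash \mk{Q}{0} \to Q$ for every type $Q$ by structural induction, then replace $q$ by $Q$ throughout the given derivation, observing that substitution is a homomorphism on types and leaves marking bits untouched, so every rule instance (including its unmarkedness side condition) maps to a legal instance of the same rule. That is exactly the published argument, only spelled out in more detail.

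There is, however, one concrete error in your treatment of the case $Q = {!}Q'$ of the auxiliary lemma. You propose to pass from $\mk{Q'}{0} \to Q'$ first through $({!}\to)$ and then through $(\to{!})$, claiming the $({!}\to)$ step is ``legitimate because the analogue of $\mk{Q'}{0}$ on the left still counts as an unmarked type.'' This is wrong on two counts. First, the side condition of $({!}\to)$ requires the \emph{context} $\Delta_1, \Delta_2$ --- which excludes the principal formula being banged --- to contain an unmarked type; here that context is $\Lambda$, so the application is forbidden. Second, even if it were permitted, $({!}\to)$ forces the new formula to carry marking bit $1$, so you would end with $\mk{({!}Q')}{1} \to {!}Q'$ rather than the required $\mk{({!}Q')}{0} \to {!}Q'$. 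The correct (and simpler) move is a single application of the strengthened $(\to{!})$ rule with ${!}\Gamma = \Lambda$ and $\Delta = \mk{Q'}{0}$: since ${!}\Delta = \mk{({!}Q')}{0}$ preserves the marking bit and $(\to{!})$ carries no unmarkedness side condition, this yields $\mk{({!}Q')}{0} \to {!}Q'$ directly from the induction hypothesis; $({!}\to)$ plays no role here. With that local repair your proof is complete and agrees with the paper's.
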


\begin{proof}
By structural induction on $Q$ we prove that
$\ELm \vdash \mk{Q}{0} \to Q$ for every type $Q$. Then
we just replace $q$ with $Q$ everywhere in the proof. \qed
\end{proof}

The cut rule in $\ELm$ is generally not admissible:
the sequents $\mk{({!}q)}{0} \to (p \SL {!}q) \BS p$ and
$\mk{((p \SL {!}q) \BS p)}{0} \to p \BS p$ are derivable in
$\ELm$, but $\mk{({!}q)}{0} \to p \BS p$ is not. This counterexample is
actually taken from the proof of Theorem~\ref{Th:ELd}.

The cut rule is admissible  only in the following limited version:

\begin{theorem}
If $\ELm \vdash \Pi \to A$, $\ELm \vdash \Delta_1, \mk{A}{0}, \Delta_2 \to C$, and
$A$ does not contain ${!}$,
then $\ELm \vdash \Delta_1, \Pi, \Delta_2 \to C$.
\end{theorem}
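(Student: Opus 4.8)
The plan is to prove the limited cut rule for $\ELm$ by essentially the same double-induction argument that worked for $\ELs$, with the crucial extra bookkeeping of the marking bits. I would do double induction: the outer parameter is the complexity of the cut formula $A$, and the inner parameter is the sum of the depths of the two premise derivations. The case analysis follows the familiar pattern. First, if one premise is an axiom $\mk{p}{0} \to p$, the cut disappears. Second (the principal case), if $A = E \SL F$ or $A = F \BS E$ is introduced by the lowermost rule in \emph{both} premises: since $A$ has no ${!}$, it is introduced on the right by $(\to\SL)$ or $(\to\BS)$ and on the left by $(\SL\to)$ or $(\BS\to)$, and we reduce to two cuts on the strictly simpler formulae $E$ and $F$, exactly as in Transformation~4. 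Here I must check that the new applications of $(\to\SL)$/$(\to\BS)$ still see an unmarked type in their antecedents; this follows because the original right-introduction of $A$ did, and cutting in $\Gamma$ (or reorganising contexts) only enlarges the antecedent in ways that preserve an unmarked occurrence.

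Third, the commuting cases: $A$ is not the formula introduced by the lowermost rule of (at least) one premise, and we permute $(\CUT)$ above that rule. As in $\ELs$, the delicate subcases are those where the permuted rule carries a Lambek-style restriction, namely $(\to\SL)$, $(\to\BS)$, and $({!}\to)$, because after the permutation the antecedent of that rule changes and we must re-verify that it still contains an unmarked type. The key observations are: (i) when we permute $(\CUT)$ past a rule acting on the side formula $\Delta_1,\Delta_2$, the context only grows or stays the same in its unmarked content if the cut formula $A$ itself is unmarked in that position --- and by hypothesis $A$ carries marking bit $0$ in $\Delta_1, \mk{A}{0}, \Delta_2$, so if the required unmarked type \emph{was} $A$, then after cutting we have substituted $\Pi$ for it, and since $A$ is a non-bang-formula derivable from $\Pi$, Lemma~\ref{Lm:ELsRestr}'s analogue for $\ELm$ (or directly Lemma~\ref{Lm:nonbang_inherit}-style reasoning adapted to markings) guarantees $\Pi$ contains an unmarked type; (ii) when $(\CUT)$ is permuted past a rule acting on $\Pi \to A$ (left premise), the marking bit on $A$ is irrelevant to that premise, and the restriction there was already satisfied.

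The subtlety specific to $\ELm$ --- and the step I expect to be the main obstacle --- is the interaction of $(\CUT)$ with the marking-bit arithmetic in $(\CONTR)$ (the $\min$ of the two bits) and with $(\to{!})$ and $(\WEAK)$, which can only produce \emph{marked} bang-formulae. When we push a cut above $(\CONTR)$, we replace one copy of $\mk{(!D)}{m_i}$ by $\Pi$; since $A$ has no ${!}$, the contracted formula $!D$ is never the cut formula, so the bit $\min\{m_1,m_2\}$ is unaffected and the permutation is straightforward. The genuinely new point is that in $\ELm$ the right-introduction rule $(\to{!})$ is present (unlike in $\ELs$), so we may need a commuting case where $(\CUT)$ meets $(\to{!})$ on the left premise: here $\Pi \to A$ cannot end in $(\to{!})$ because $A$ is ${!}$-free, and on the right premise $(\to{!})$ produces a succedent $!C$, so $C$ would have to equal... no, the cut formula $A$ sits in the antecedent of the right premise, so $(\to{!})$ acting on that premise leaves $A$ untouched and we permute freely, checking only that $\mk{A}{0}$, being unmarked, does not obstruct reconstituting the $!\Gamma$ portion (it will not, since $(\to{!})$ only cares that the antecedent be bang-formulae in the $!\Gamma$ block, and $A$ lies in $\Delta$ where arbitrary formulae are allowed).

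So the proof is routine modulo these checks; I would present it as: "We proceed by double induction exactly as in the proof for $\ELs$, with the outer induction on the complexity of $A$ and the inner on the sum of premise-derivation depths. All cases are handled as before; we indicate only the modifications forced by the marking bits." Then I would display the analogues of Transformations~1--4 with bits attached, and in each restriction-sensitive case remark that an unmarked type survives because either it was not $A$ (hence unchanged) or it was $A$ --- impossible, since $A$ appears as $\mk{A}{0}$ but would then be the unique unmarked type, yet being a non-bang-formula derived from $\Pi$, $\Pi$ must itself contain an unmarked type by the $\ELm$-analogue of Lemmas~\ref{Lm:ELsRestr} and~\ref{Lm:nonbang_inherit}. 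The hard part is precisely assembling that $\ELm$-analogue of Lemma~\ref{Lm:ELsRestr} correctly --- that any $\ELm$-derivable sequent with a ${!}$-free succedent has an unmarked type in the antecedent --- and confirming it is stable under all the transformations.
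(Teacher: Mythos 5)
Your overall strategy---the standard double induction (outer on the complexity of $A$, inner on the sum of premise depths), with extra bookkeeping for the marking bits---is exactly what the paper intends; its entire proof is the remark that the statement ``is proved using the standard argument, just as for $\LL$.'' However, there is a concrete hole in your principal case. When $A = E \SL F$ is introduced on the right by $(\to\SL)$, the premise of that rule is $\Pi, \mk{F}{m} \to E$, and the rule places \emph{no} constraint on $m$: the occurrence of $F$ that you now want to cut on may carry marking bit $1$. The theorem you are inducting on only licenses cuts on occurrences of the form $\mk{A}{0}$, so your induction hypothesis simply does not apply to the inner cut of $\Gamma \to F$ against $\Pi, \mk{F}{1} \to E$. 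You must strengthen the statement being proved to: if $\ELm \vdash \Pi \to A$, $\ELm \vdash \Delta_1, \mk{A}{m}, \Delta_2 \to C$, and $A$ is ${!}$-free, then $\ELm \vdash \Delta_1, \Pi, \Delta_2 \to C$ (for either value of $m$), and then check that replacing a \emph{marked} occurrence by $\Pi$ is harmless --- which it is, since every side condition in $\ELm$ only ever demands the \emph{presence} of an unmarked type (or, for $(\to{!})$, an all-bang antecedent, which a ${!}$-free $A$ can never inhabit), so importing the extra unmarked types of $\Pi$ can never invalidate a rule application above the cut.

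Two smaller points. First, the auxiliary fact you flag as ``the hard part'' --- that a derivable sequent with ${!}$-free succedent has an unmarked type in its antecedent --- is actually immediate and holds for \emph{every} derivable sequent of $\ELm$: the axiom $\mk{p}{0} \to p$ has one, and every rule preserves the property ($(\CONTR)$ takes the $\min$ of the bits, $({!}\to)$ and $(\WEAK)$ only create marked types while their side conditions guarantee a surviving unmarked witness, and the remaining rules inherit bits). So the invariant you need is the designed Lambek's restriction of $\ELm$ itself, available by a one-line induction. Second, your discussion of commuting $(\CUT)$ past $(\to{!})$ on the right premise is off: the conclusion of $(\to{!})$ has antecedent ${!}\Gamma, {!}\Delta$ consisting \emph{entirely} of bang-formulae, so a ${!}$-free cut formula $A$ cannot occur there at all; the case is vacuous rather than ``permutes freely.'' Neither of these is fatal, but the missing strengthening of the induction hypothesis is a genuine gap that must be repaired for the principal case to go through.
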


This theorem is proved using the standard argument, just as for $\LL$.

Compare $\ELm$ with $\ELs$. These two systems are not connected
with any strong form of conservativity or equivalence:
on one hand, the sequent ${!}r, r \BS {!}p, {!}(p \BS q) \to q$ is derivable
in $\ELs$, but not in $\ELm$; on the other hand,
 for ${!}p, {!}({!}p \BS q) \to q$ the situation is opposite.
 Fortunately,
the following holds:

\begin{lemma}\label{Lm:conserv}
If $\Gamma$, $\Pi$, and $A$ do not contain ${!}$, then
$$\ELm \vdash {!}\Gamma, \Pi \to A \iff \ELs \vdash {!}\Gamma, \Pi \to A.$$
\end{lemma}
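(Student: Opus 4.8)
The plan is to prove the two directions separately; the right-to-left direction is the harder one. For the direction $\ELm \vdash {!}\Gamma, \Pi \to A \Rightarrow \ELs \vdash {!}\Gamma, \Pi \to A$, I would first observe that $\ELm$ and $\ELs$ are ``almost the same calculus,'' the only real differences being the marking bits, the stronger $(\to{!})$ rule of $\ELm$, and the slightly different shapes of the $(\WEAK)$, $({!}\to)$, and axiom rules. So I would argue by induction on the $\ELm$-derivation of ${!}\Gamma, \Pi \to A$, but I need to generalize the statement to an arbitrary derivable marked sequent, forgetting the bits: \emph{if $\ELm \vdash \Sigma \to C$ and $\Sigma \to C$ (with bits erased) has the property that every ${!}$-formula in $\Sigma$ sits ``above'' a non-bang-formula in the appropriate sense, then $\ELs \vdash |\Sigma| \to C$}, where $|\Sigma|$ erases the bits. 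The subtle points are (i) the $(\to{!})$ rule of $\ELm$ allows a mixed antecedent ${!}\Gamma, \Delta$ while $\ELs$ has no $(\to{!})$ rule at all --- but by Lemma~\ref{Lm:ELsRestr} applied contrapositively, a succedent of the form ${!}A$ in $\ELs$ forces the antecedent to be all ${!}$-formulae, which cannot happen here since $A$ is bang-free, so in fact the hypothesis of the lemma rules out this branch ever producing our target sequent; more carefully, one shows that any $\ELm$-derivation of a sequent with bang-free succedent and at least one non-bang-formula in the antecedent never needs $(\to{!})$ above that non-bang-formula's ``thread,'' so the translation goes through. (ii) The $({!}\to)$ rule in $\ELm$ has restriction ``$\Delta_1,\Delta_2$ contains an unmarked type,'' which after erasing bits is weaker than ``$\Delta_1,\Delta_2$ contains a non-bang-formula''; but a marked type $\mk{B}{1}$ with $B$ not a bang is still a non-bang-formula after erasure, so I must check that the $\ELs$-restriction is still met --- and since by Lemma~\ref{Lm:ELsRestr}-style reasoning every sequent in the relevant part of the derivation already has a non-bang-formula in its antecedent, this is fine.

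For the direction $\ELs \vdash {!}\Gamma, \Pi \to A \Rightarrow \ELm \vdash {!}\Gamma, \Pi \to A$, I would again proceed by induction on the $\ELs$-derivation, this time \emph{reconstructing} marking bits. The natural bookkeeping rule is: a ${!}$-formula introduced by $(\WEAK)$ gets bit $1$, everything principal in a logical rule inherits the bit supplied from below, and the bit-$0$ axiom $\mk{p}{0}\to p$ is used for $\ELs$-axioms $A\to A$ after first proving (by induction on $A$, as in the theorem on substitution in $\ELm$, which is already established) that $\ELm\vdash\mk{A}{0}\to A$. The one place this could fail is when $\ELs$ uses $(\to{!})$ --- but $\ELs$ has \emph{no} $(\to{!})$ rule, which is exactly why this direction is clean on that front. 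The genuine obstacle is the $({!}\to)$ rule: in $\ELm$ it produces $\mk{({!}A)}{1}$, i.e.\ it can only be applied to turn an (erased) position into a \emph{marked} bang-formula, whereas in $\ELs$ the resulting $!A$ is an ordinary antecedent formula that may later be used as the active formula in $({!}\to)$'s restriction clause for a \emph{different} $({!}\to)$ step, or be contracted/weakened freely. I would handle this by showing that in $\ELm$ a marked $\mk{({!}A)}{1}$ behaves, for all purposes relevant here (exchange, contraction against a bit-$0$ copy via the $\min$-rule, weakening), at least as permissively as needed, and that whenever $\ELs$ relies on a bang-formula $!A$ to \emph{witness} the ``contains a non-bang-formula'' side condition of some rule, that witness in $\ELs$ must actually be a genuine non-bang-formula (bangs never serve as the witness), so the $\ELm$ restriction ``contains an unmarked type'' is satisfied by the same genuine non-bang type, to which we assign bit $0$.

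Putting the two inductions together gives the biconditional. I expect the main obstacle to be the right-to-left (bit-reconstruction) direction, specifically keeping the invariant ``there is always an unmarked type in the antecedent, and it may be chosen among the genuine non-bang-formulae'' stable under $(\CONTR)$ with its $\min$ on bits and under the restricted $({!}\to)$: one has to make sure the bit assigned to the distinguished witness type is never accidentally forced to $1$. This is a finite case analysis over the rules, entirely analogous in spirit to the cut-elimination bookkeeping already carried out for $\ELs$, so while tedious it presents no conceptual difficulty; the linguistic/semantic content is simply that marking in $\ELm$ is a more refined but equivalent way of enforcing the same ``no empty-antecedent mimicry'' discipline as the non-bang-formula side conditions of $\ELs$, once we are in the common bang-free-antecedent-modulo-prefix regime described by the lemma's hypothesis.
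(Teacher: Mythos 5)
Your proposal is correct and follows essentially the same route as the paper, which compresses both of your inductions into the single observation that for sequents of this shape the rule $(\to{!})$ can never occur in the derivation, so the marked types in antecedents are exactly the types beginning with ${!}$ and the two versions of Lambek's restriction coincide. The only blemish is the appeal to Lemma~\ref{Lm:ELsRestr} ``applied contrapositively'' (what you state is closer to its converse, and it concerns $\ELs$ rather than $\ELm$), but the subformula-style argument you sketch immediately afterwards is the correct justification, so this does not affect the proof.
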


\begin{proof}
Since for a sequent of the form
${!}\Gamma, \Pi \to A$ the rule $(\to{!})$ can never appear in the proof,
marked types in the antecedent are exactly the types starting with $!$, and
the two versions of Lambek's restriction coincide. \qed
\end{proof}

\section{Conservativity over $\LL$}

The three calculi defined above are conservative over $\LL$:

\begin{proposition}
If $\Pi$ and $A$ do not contain $!$, then
$$
\LL \vdash \Pi \to A
\iff
\ELw \vdash \Pi \to A
\iff
\ELs \vdash \Pi \to A
\iff
\ELm \vdash \Pi \to A
$$
(for $\ELm$, all types in $\Pi$ get the 0 marking bit).
\end{proposition}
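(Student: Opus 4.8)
The plan is to prove the chain of equivalences by establishing each individual arrow, exploiting the fact that all four calculi are given cut-free and share essentially the same core Lambek rules, so that the only real content is what happens when exponential-related rules or Lambek's restriction interfere. The leftmost-to-rightmost direction ($\LL \vdash \Pi \to A$ implies derivability in each extension) is immediate: every axiom and rule of $\LL$ is literally an axiom or rule of $\ELw$, of $\ELs$, and (after decorating every antecedent type with the $0$ marking bit) of $\ELm$, and Lambek's restriction is satisfied throughout since a $\LL$-derivation never has an empty antecedent and, in the $\ELm$ case, every type carries bit $0$, hence is unmarked. So $\LL \vdash \Pi \to A$ gives all three.

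The substantive direction is the converse: a cut-free derivation in $\ELw$, $\ELs$, or $\ELm$ of a sequent $\Pi \to A$ in which neither $\Pi$ nor $A$ contains ${!}$ must in fact already be a $\LL$-derivation. First I would observe the subformula property: since the calculi are cut-free, every formula occurring anywhere in the derivation is a subformula of $\Pi \to A$, hence contains no occurrence of ${!}$. Consequently none of the rules $({!}\to)$, $(\to{!})$, $(\WEAK)$, $(\CONTR)$, $(\EXC_1)$, $(\EXC_2)$ can appear in the derivation — each of these requires a formula with ${!}$ as its main connective somewhere in the sequent it acts on. Therefore only the purely Lambek rules $(\to\BS)$, $(\to\SL)$, $(\BS\to)$, $(\SL\to)$ and the axioms are used. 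For $\ELs$ and $\ELm$ these come with the extra side condition ``$\Pi$ contains a non-bang-formula / an unmarked type'' on $(\to\BS)$ and $(\to\SL)$; but in a ${!}$-free derivation \emph{every} antecedent formula is a non-bang-formula, and in $\ELm$ the side condition ``unmarked'' is automatically met because — with no $({!}\to)$, $(\WEAK)$, $(\to{!})$, $(\CONTR)$ ever firing — the marking bits are never touched and remain $0$ throughout (this is exactly the observation already made in the proof of Lemma~\ref{Lm:conserv}). Hence these side conditions collapse to the genuine non-emptiness constraint of $\LL$, and the derivation, with marking bits erased, is literally a $\LL$-derivation of $\Pi \to A$. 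For $\ELw$ the side condition on $(\to\BS)$, $(\to\SL)$ is precisely $\LL$'s ``$\Pi$ non-empty,'' so again the derivation is a $\LL$-derivation verbatim.

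Wrapping up, these observations give $\ELw \vdash \Pi \to A \Rightarrow \LL \vdash \Pi \to A$, and likewise for $\ELs$ and $\ELm$, closing the cycle of four equivalences. I do not expect a serious obstacle here; the one point to handle carefully is the $\ELm$ bookkeeping, namely checking that no rule in a ${!}$-free cut-free derivation ever produces a marked type, so that the ``contains an unmarked type'' restriction coincides with non-emptiness — but this is exactly the argument already used for Lemma~\ref{Lm:conserv} and needs only be invoked. One should also note explicitly that the $\ELm$ axiom is $\mk{p}{0}\to p$ and that $A\to A$ for compound ${!}$-free $A$ is recovered via the substitution theorem (or directly by an easy induction), matching $\LL$'s axiom scheme; this is the only place where $\ELm$'s axioms are not syntactically identical to $\LL$'s, and it is harmless.
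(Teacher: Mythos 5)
Your proposal is correct, and it is exactly the intended argument: the paper states this proposition without an explicit proof, relying on precisely the observation you make, namely that a cut-free derivation of a ${!}$-free sequent enjoys the subformula property, so no exponential rule can fire and the various side conditions (``non-bang-formula,'' ``unmarked type'') collapse to $\LL$'s non-emptiness constraint. You also correctly flag the one genuine bookkeeping point, that $\ELm$'s axiom is $\mk{p}{0}\to p$ for variables only, so the compound axioms $A\to A$ of $\LL$ must be recovered by the easy induction already used in the paper's substitution theorem.
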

Note that $\Pi$ is necessarily non-empty.

Therefore, we guarantee that in all approaches
the innovation affects only the new exponential connective,
and keeps the original Lambek system intact.
For $\ELs$ and $\ELm$ adding fresh exponentials to
the antecedent also does not affect Lambek's restriction:

\begin{proposition}
If $\Pi$ and $A$ do not contain ${!}$, and $p$ is a variable not
occurring in $\Pi$ and $A$, then $$\ELs \vdash {!}p, \Pi \to A
 \iff \ELm \vdash {!}p, \Pi \to A \iff \LL \vdash \Pi \to A$$ (for the
 $\ELm$ case, ${!}p$ gets marking bit 1 and types from $\Pi$ get 0).
 \end{proposition}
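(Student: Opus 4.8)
The plan is to prove the three-way equivalence by a chain of implications, using the conservativity results already established and the vanishing-of-Lambek's-restriction phenomenon for $\ELw$ together with the more rigid behaviour of $\ELs$ and $\ELm$. The non-trivial direction is showing that adding a fresh $!p$ to the antecedent cannot smuggle in any $\LLS$-derivation that is not already an $\LL$-derivation; the two easy directions (from $\LL$ to the bang-extended systems) are handled by weakening.

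First I would establish $\LL \vdash \Pi \to A \Rightarrow \ELm \vdash {!}p, \Pi \to A$ (with $!p$ marked and all of $\Pi$ unmarked) and similarly for $\ELs$. For $\ELm$: since $\LL$ is a subsystem of $\ELm$ on bang-free sequents (by the preceding Proposition, with all types getting marking bit $0$), we get $\ELm \vdash \Pi \to A$, and then one application of $(\WEAK)$ prepends $\mk{({!}p)}{1}$; note that the $(\WEAK)$ rule as stated produces exactly a marked $!p$, and Lambek's restriction is preserved because $\Pi$ is non-empty and consists of unmarked types. For $\ELs$: by the same Proposition $\ELs \vdash \Pi \to A$, and $(\WEAK)$ gives $\ELs \vdash {!}p, \Pi \to A$. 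The middle equivalence $\ELs \vdash {!}p, \Pi \to A \iff \ELm \vdash {!}p, \Pi \to A$ is an instance of Lemma~\ref{Lm:conserv} (take $\Gamma$ to be the single type $p$, which contains no ${!}$), so it suffices to close the loop by proving $\ELs \vdash {!}p, \Pi \to A \Rightarrow \LL \vdash \Pi \to A$.

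For this last implication I would argue as follows. By the already-proved cut-elimination for $\ELs$ we may take a cut-free $\ELs$-derivation of ${!}p, \Pi \to A$. Since $p$ does not occur in $\Pi$ or $A$, the formula $!p$ is a ``passenger'': trace its occurrences through the derivation. It can only be introduced by the axiom $!p \to !p$ (impossible, since $!p$ is a proper subformula appearing only in the antecedent context and the succedent is $A$, bang-free), or it enters via $(\WEAK)$, or it is duplicated/moved by $(\CONTR)$ and $(\EXC)$; it can never be acted on by $({!}\to)$ in a way that matters, because $p$ is fresh so no $({!}\to)$ step strips a $!p$ that was later used. More precisely, one shows by induction on the cut-free derivation that every sequent in it has the form ${!}p, \dots, {!}p, \Sigma \to D$ (some number $\ge 0$ of copies of $!p$, interleaved, plus a bang-... wait) — here the clean statement is: deleting all occurrences of $!p$ from every antecedent in the derivation yields a valid $\ELs$-derivation (each rule instance remains a legal instance, possibly with $(\WEAK)$/$(\CONTR)$/$(\EXC)$ steps on $!p$ becoming trivial and removable), hence $\ELs \vdash \Pi \to A$; then the bang-free conservativity Proposition gives $\LL \vdash \Pi \to A$. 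The only subtlety to check is Lambek's restriction: after deleting all the $!p$'s, does every sequent still contain a non-bang-formula? Yes, by Lemma~\ref{Lm:ELsRestr} applied along the way, or more directly: the only sequents in the original derivation whose antecedent could lose its last non-bang-formula upon deleting $!p$ would be sequents of the form ${!}p, \dots, {!}p \to D$ with $D$ bang-free; but by Lemma~\ref{Lm:nonbang_inherit} such a sequent with a bang-free succedent must already contain a non-bang-formula, contradiction — so no such sequent occurs, and deletion is safe.

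The main obstacle is precisely this last bookkeeping: verifying that erasing the fresh $!p$ from a cut-free $\ELs$-derivation never destroys the ``contains a non-bang-formula'' invariant and never turns a legal rule instance into an illegal one (in particular the side conditions on $({!}\to)$, $(\to\SL)$, $(\to\BS)$ referring to the presence of a non-bang-formula in a \emph{context} $\Delta_1,\Delta_2$ or $\Pi$ — deleting $!p$ from such a context is harmless since $!p$ was never the witnessing non-bang-formula). Everything else is routine: the easy directions are single $(\WEAK)$ applications, and the $\ELs \leftrightarrow \ELm$ bridge is Lemma~\ref{Lm:conserv} verbatim. I would organize the write-up as: (i) cite the bang-free Proposition and do the two $(\WEAK)$ directions; (ii) cite Lemma~\ref{Lm:conserv} for the $\ELs/\ELm$ equivalence; (iii) prove the erasure lemma for $\ELs$ with the Lemma~\ref{Lm:nonbang_inherit} argument guarding Lambek's restriction; (iv) conclude by the bang-free conservativity Proposition again.
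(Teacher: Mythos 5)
The paper states this proposition without proof, so there is no official argument to compare against; your proof is correct and is assembled from exactly the ingredients the paper supplies — weakening to get from $\LL$ into the bang-extended systems, Lemma~\ref{Lm:conserv} with $\Gamma = p$ for the $\ELs$/$\ELm$ bridge, the bang-free conservativity proposition, and an erasure argument for the fresh $!p$ guarded by Lemmas~\ref{Lm:ELsRestr} and~\ref{Lm:nonbang_inherit}. The one point worth spelling out in the erasure step is the subformula-property bookkeeping: in a cut-free $\ELs$-derivation of ${!}p, \Pi \to A$ the only available $!$-subformula is $!p$ itself and no succedent can be $!p$ (nor can a bare $p$ ever enter an antecedent), which is precisely what rules out the axiom $!p \to !p$, shows $!p$ is never active in a division or $({!}\to)$ rule, and guarantees every succedent is a non-bang-formula so that Lemma~\ref{Lm:nonbang_inherit} applies as you claim.
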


For $\ELw$, due to Lemma~\ref{Lm:vanish}, the situation is different:
if $\Pi$ and $A$ do not contain ${!}$, and $p$ is a fresh variable, then
$$\ELw \vdash {!}p, \Pi \to A \iff  \ELS \vdash \Pi \to A \iff \LLS \vdash \Pi \to A.$$ Recall that, for example, $(q \BS q) \BS p \to p$ is derivable in $\LLS$, but not
in $\LL$.

\section{Generative Grammars and the Lambek Calculus with Non-Logical Axioms}\label{S:Buszko}

In this subsection we introduce {\em axiomatic extensions}
of the Lambek calculus $\LL$, following~\cite{Buszko82}.
These extensions are going to be useful for proving
undecidability results \`{a} la Theorem~\ref{Th:undecELS}.

Let
$\Ac$ be a set of sequents. Then by $\LL + \Ac$ we denote
$\LL$ augmented with sequents from $\Ac$ as new axioms and
also the cut rule (which is no longer eliminable). Elements
of $\Ac$ are called {\em non-logical axioms.} 

Further we consider non-logical axioms of a special form:
either $p, q \to r$, or $p \SL q \to r$, where $p,q,r$ are variables.
Buszkowski calls them {\em special} non-logical axioms.
In this case, $\LL + \Ac$ can be formulated in a cut-free 
way~\cite{Buszko82}: instead of non-logical axioms of the
form $p, q \to r$ or $p \SL q \to r$ we use rules
$$\infer[(\RED_1)]{\Pi_1, \Pi_2 \to r}{\Pi_1 \to p & \Pi_2 \to q}
\mbox{\qquad and\qquad}
\infer[(\RED_2)\mbox{, where $\Pi \ne \Lambda$}]{\Pi \to r}{\Pi, q \to p}$$ respectively. This calculus admits the cut rule~\cite{Buszko82}. Further we'll mean it when
talking about $\LL + \Ac$. We'll use the term {\em Buszkowski's rules}
for $(\RED_i)$.

Now we define two notions of {\em formal grammar}. The first one
is the widely known formalism of {\em generative grammars} introduced by Chomsky.
If $\Sigma$ is an alphabet (i.e.\ a finite non-empty set), then by $\Sigma^*$ we
denote the set of all words over $\Sigma$ (including the empty word).
A generative grammar is a quadruple $G = \langle N, \Sigma, s, P \rangle$, where
$N$ and $\Sigma$ are two disjoint alphabets, $s \in N$, and $P$ is a set or {\em rules.}
Here we consider only rules of two forms: $x \to y_1 y_2$ or $x_1 x_2 \to y$, where
$x, y, x_i, y_i \in N \cup \Sigma$. If $v = u_1 \alpha u_2$, $w = u_1 \beta u_2$, and
$(\alpha \to \beta) \in P$, then this rule can be {\em applied} to $v$ yielding $w$:
$v \Rightarrow w$. By $\Rightarrow^*$ we denote the reflexive and transitive
closure of $\Rightarrow$. Finally, the {\em language generated by $G$} is the set of
all words $w \in \Sigma^*$, such that $s \Rightarrow^* w$. Note that
the empty word cannot be produced by a generative grammar as \mbox{defined above.}

It is well known that the class of languages generated by generative grammars
coincides with the class of all recursively enumerable (r.\,e.) languages
without the empty word.

The second family of formal grammar we are going to consider is the class
of {\em Lambek categorial grammars with non-logical axioms.} A Lambek grammar
is a tuple $\mathcal{G} = \langle \Sigma, \Ac, H, \rhd \rangle$, where
$\Sigma$ is an alphabet, $\Ac$ is a set of non-logical axioms, $H$ is a type,
and $\rhd \subseteq \Tp \times \Sigma$ is a finite binary correspondence between
types and letter, called {\em type assignment.}
A word $w = a_1 \dots a_n$ belongs to the language generated by
$\mathcal{G}$ if{f} there exist such types $A_1, \dots, A_n$ that
$A_i \rhd a_i$ ($i = 1, \dots, n$) and $\LL + \Ac \vdash A_1, \dots, A_n \to H$.

If we use $\LL\!_{{}\SL{}}$ instead of $\LL$, we get the notion of
{\em $\LL\!_{{}\SL{}}$-grammar} with non-logical axioms.
It's easy to see that all languages generated by Lambek grammars are r.\,e., therefore,
they can be generated by generative grammars. Buszkowski~\cite{Buszko82} proves the
converse:

\begin{theorem}\label{Th:Buszko}
Every language generated by a generative grammar can be generated by
an $\LL\!_{{}\SL{}}$-grammar with special non-logical axioms.
\end{theorem}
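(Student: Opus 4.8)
The plan is to simulate a generative grammar $G = \langle N, \Sigma, s, P\rangle$ by an $\LL\!_{{}\SL{}}$-grammar with special non-logical axioms. The key idea is that the derivations $s \Rightarrow^* w$ are essentially rewriting sequences, and the rules of $G$ have exactly the two forms $x \to y_1 y_2$ and $x_1 x_2 \to y$ that Buszkowski's special non-logical axioms are designed to handle. First I would introduce, for every symbol $\xi \in N \cup \Sigma$, a fresh variable (which I will also call $\xi$ by abuse), plus perhaps an auxiliary variable or two for bookkeeping. For each rule $(x \to y_1 y_2) \in P$ I would like a non-logical axiom that lets me replace $y_1, y_2$ (on the left of a sequent) by $x$; reading right-to-left, an axiom of the form $p, q \to r$ does precisely this via the cut rule, so I take $y_1, y_2 \to x$. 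For each rule $(x_1 x_2 \to y) \in P$ I need to replace $y$ by $x_1, x_2$, which goes in the expanding direction; here the axiom $p \SL q \to r$ comes in — using $\LL\!_{{}\SL{}}$-derivable sequents of the form $A, A\SL B \to B$ together with a special axiom one can splice in two symbols. I would set the goal type $H$ to be $s$, and the type assignment $\rhd$ to relate, for each terminal $a \in \Sigma$, the variable $a$ (and possibly some wrapper type) to the letter $a$.

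The heart of the argument is the equivalence: $s \Rightarrow^* a_1\cdots a_n$ in $G$ if and only if $\LL\!_{{}\SL{}} + \Ac \vdash A_1, \dots, A_n \to s$ for the chosen type assignment. The forward direction I would prove by induction on the length of the derivation $s \Rightarrow^* w$: each rewriting step $v \Rightarrow w$ is mirrored, in reverse, by one application of the cut rule against the corresponding non-logical axiom, turning a proof of (the type-sequent for) $v$ into one for $w$ — or rather, building the proof of $w \to s$ from the proof of $v \to s$ by cutting in the axiom that contracts the introduced symbols back to the ones they came from. I would want to be slightly careful that the pure Lambek part $\LL\!_{{}\SL{}}$ supplies exactly the derivations $A, A\SL B \to B$ and the identity axioms $\xi \to \xi$ that the $p\SL q \to r$ axioms need as glue, and that no other sequents become derivable by accident.

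The converse — completeness, i.e.\ every derivable sequent $A_1,\dots,A_n \to s$ corresponds to a rewriting $s \Rightarrow^* a_1\cdots a_n$ — is the main obstacle, and here I would lean on the cut-free formulation of $\LL\!_{{}\SL{}} + \Ac$ via Buszkowski's rules $(\RED_1)$ and $(\RED_2)$ (from Section~\ref{S:Buszko}, which the theorem statement is set up to use). Working with the cut-free system, a derivation of $\Gamma \to s$ where every type in $\Gamma$ is (a wrapper around) a single variable has a very constrained shape: the only applicable rules are $(\SL\to)$, $(\to\SL)$ with the Lambek's-restriction side condition, and the two Buszkowski rules, and one can read off from the proof tree a sequence of rule applications of $P$ in reverse. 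The technical work is to show that the variables one can ever manufacture in the antecedent of a provable sequent stay within $N \cup \Sigma$ (plus the controlled auxiliaries) and that the tree decomposes cleanly into blocks each corresponding to one application of a grammar rule; an induction on the size of the cut-free derivation, with a carefully stated invariant about the form of all sequents appearing, should do it. I would expect the bookkeeping around the distinction between $N$ and $\Sigma$ symbols, and around ensuring terminals are not further rewritten, to be where the auxiliary variables earn their keep.
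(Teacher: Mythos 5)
First, note that the paper does not actually prove Theorem~\ref{Th:Buszko}: it is imported from Buszkowski~\cite{Buszko82}, so there is no in-paper argument to measure your proposal against. Judged on its own, your sketch gets the framework right (one variable per symbol of $N \cup \Sigma$, special axioms extracted from the rules of $P$, goal type $s$, a soundness induction along $s \Rightarrow^* w$ using cut, and a completeness analysis over the cut-free $(\RED_1)$/$(\RED_2)$ presentation), and your treatment of the expanding rules $x \to y_1 y_2$ via the axiom $y_1, y_2 \to x$ and one application of cut is correct. But that part alone only ever produces context-free languages (consistently with Pentus' theorem for $\Ac = \varnothing$, quoted right after the statement); all of the recursive enumerability, and hence all of the content of the theorem, lives in the contracting rules $x_1 x_2 \to y$, and that is exactly where your sketch has a hole.

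For a step $u_1 x_1 x_2 u_2 \Rightarrow u_1 y u_2$ your induction must pass from a derivation of $\Delta_1, x_1, x_2, \Delta_2 \to s$ to one of $\Delta_1, y, \Delta_2 \to s$, i.e.\ replace \emph{two adjacent} antecedent formulas by \emph{one}, uniformly in the contexts $\Delta_1$, $\Delta_2$. A cut against a sequent $y \to D$ replaces one antecedent formula by one; a cut against $C_1, C_2 \to y$ goes in the opposite (expanding) direction; and the only single type that behaves like the concatenation of $x_1$ and $x_2$ in an arbitrary context would be the product $x_1 \cdot x_2$, which is unavailable in the product-free, one-division fragment. Your proposed glue is also off: $A, A \SL B \to B$ is not derivable in $\LL\!_{{}\SL{}}$ (the application law for right division is $A \SL B, B \to A$), and ``splicing in two symbols'' is the expanding operation you already handled, not the contracting one you need. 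What the axioms $p \SL q \to r$ actually provide, via $(\RED_2)$, is cancellation of a single variable at the \emph{right end} of the antecedent (from $\Pi, q \to p$ infer $\Pi \to r$); converting that local, end-of-string operation into a simulation of length-decreasing rewriting in the middle of a sentential form requires a genuinely different global encoding of derivations --- this is the substance of Buszkowski's construction --- and no amount of bookkeeping with auxiliary variables around your present scheme will supply it.
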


In comparison, for $\Ac = \varnothing$ Pentus' theorem~\cite{Pentus93} states
that all languages generated are context-free.
 Thus, even simple (special) non-logical axioms dramatically increase
the power (and complexity) of Lambek grammars.

Since there exist undecidable r.\,e.\ languages, Buszkowski obtains the 
following~\cite{Buszko82}:
\begin{theorem}\label{Th:LAundec}
There exists such $\Ac$ that the derivability problem for $\LL\!_{{}\SL{}} + \Ac$
is undecidable.
\end{theorem}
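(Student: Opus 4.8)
The final statement to prove is Theorem~\ref{Th:LAundec}: there exists a set $\Ac$ of special non-logical axioms such that the derivability problem for $\LLS + \Ac$ is undecidable.

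The plan is to derive this as an immediate consequence of Theorem~\ref{Th:Buszko} together with the classical fact that there exist undecidable recursively enumerable languages. First I would fix an r.\,e.\ language $L_0 \subseteq \Sigma_0^*$ (over some fixed alphabet $\Sigma_0$) whose membership problem is undecidable; such a language exists by standard computability theory (e.g.\ the halting problem encoded as a language). Since every r.\,e.\ language without the empty word is generated by some generative grammar, and removing or adding the empty word does not affect decidability, I may assume $L_0$ is generated by a generative grammar $G_0 = \langle N, \Sigma_0, s, P\rangle$. By Theorem~\ref{Th:Buszko}, there is an $\LLS$-grammar $\Gc = \langle \Sigma_0, \Ac, H, \rhd\rangle$ with special non-logical axioms generating exactly $L_0$.

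The second step is to reduce membership in $L_0$ to derivability in $\LLS + \Ac$. Given a word $w = a_1 \dots a_n \in \Sigma_0^*$, the definition of the language of $\Gc$ says that $w \in L_0$ if and only if there exist types $A_1, \dots, A_n$ with $A_i \rhd a_i$ and $\LLS + \Ac \vdash A_1, \dots, A_n \to H$. Since the type assignment relation $\rhd$ is finite, for each letter $a_i$ there are only finitely many candidate types, so there are only finitely many sequents $A_1, \dots, A_n \to H$ to check (namely $\prod_i |\{A : A \rhd a_i\}|$ of them). Thus membership of $w$ in $L_0$ is decidable relative to an oracle for derivability in $\LLS + \Ac$: enumerate the finitely many candidate antecedents and query each. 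Consequently, if the derivability problem for $\LLS + \Ac$ were decidable, then membership in $L_0$ would be decidable, contradicting the choice of $L_0$. Hence the derivability problem for $\LLS + \Ac$ is undecidable, which is exactly the claim with this particular $\Ac$.

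I do not expect any serious obstacle here: the theorem is essentially a packaging of Theorem~\ref{Th:Buszko} (the substantive result, due to Buszkowski) with a trivial many-one reduction. The only point requiring a little care is the passage from ``language generated by a Lambek grammar'' to ``derivability problem'', i.e.\ noting that the finiteness of the type assignment makes the word-membership question a finite disjunction of derivability questions, so undecidability transfers in the right direction. One should also be mildly careful about the empty word: generative grammars as defined cannot produce $\Lambda$, so one takes $L_0$ to be an undecidable r.\,e.\ language not containing the empty word (or simply discards $\Lambda$ from an undecidable r.\,e.\ set, which preserves undecidability). With these minor caveats handled, the proof is a short reference-and-reduction argument.
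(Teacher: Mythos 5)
Your argument is exactly the one the paper intends: the paper states Theorem~\ref{Th:LAundec} as an immediate corollary of Theorem~\ref{Th:Buszko} together with the existence of undecidable r.\,e.\ languages, and your reduction (membership of a word is a finite disjunction of derivability queries, by finiteness of the type assignment, so decidability of derivability would yield decidability of the language) is precisely the standard filling-in of that step, including the correct handling of the empty word. The only blemish is notational: Theorem~\ref{Th:Buszko} and the statement concern $\LL\!_{{}\SL{}}$-grammars (the one-division fragment of $\LL$, with Lambek's restriction), not $\LLS$; this does not affect the argument.
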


\section{Undecidability of $\ELs$ and $\ELm$}\label{S:undec}

Recall that $\ELs$, defined in Section~\ref{S:ELs}, involves
two division operations, and no product. The calculus $\ELsu$
is the fragment of $\ELs$, where we confine ourselves only
to the right division.

\begin{theorem}\label{Th:undecELsu}
The derivability problem for $\ELs$ and even for $\ELsu$ is undecidable.
\end{theorem}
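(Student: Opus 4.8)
The plan is to reduce derivability in $\LLu + \Ac$ (which is undecidable for a suitable $\Ac$ by Theorem~\ref{Th:LAundec}) to derivability in $\ELsu$. The key observation is that a special non-logical axiom, being a Horn-like implication, can be encoded by a banged formula so that the structural rules available for ${!}$ simulate the unrestricted use of that axiom via the cut rule. Concretely, for a special axiom $p \SL q \to r$ one would use a type like $r \SL (p \SL q)$ (or, for $p, q \to r$, a type $r \SL (q \SL \dots)$ built from the right division alone), place it under a ${!}$ in the antecedent, and then appeal to weakening, contraction, and permutation to make copies of it available wherever Buszkowski's rules $(\RED_i)$ would fire. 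The $(\SL\to)$ rule of $\ELsu$, applied to such a banged-then-dereliction'd formula, mimics one application of a reduction rule.

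First I would make precise the translation: given a Lambek grammar with special non-logical axioms $\Ac = \{S_1, \dots, S_n\}$, translate the target sequent $A_1, \dots, A_k \to H$ into the $\ELsu$ sequent obtained by prepending ${!}C_1, \dots, {!}C_n$ (one banged ``axiom type'' $C_i$ per non-logical axiom) to the antecedent $A_1, \dots, A_k$. Then I would prove the two directions of the equivalence $\LLu + \Ac \vdash A_1, \dots, A_k \to H \iff \ELsu \vdash {!}C_1, \dots, {!}C_n, A_1, \dots, A_k \to H$ by induction on cut-free derivations in the respective calculi. The forward direction uses $(\WEAK)$ for axioms that are not needed, $(\CONTR)$ plus $(\EXC)$ to duplicate and position the relevant ${!}C_i$, then $({!}\to)$ and $(\SL\to)$ to discharge one reduction step; note the $({!}\to)$ restriction (``$\Delta_1,\Delta_2$ contains a non-bang-formula'') is harmless here because $A_1, \dots, A_k$ and $H$ do not contain ${!}$, so there is always a non-bang-formula around. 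The backward direction analyzes the last rule of an $\ELsu$ derivation: Lambek rules are translated directly; an application of $(\WEAK)$, $(\CONTR)$, or $(\EXC)$ on one of the ${!}C_i$ disappears in the $\LLu + \Ac$ image (the non-logical axioms are simply available as often as desired, with cut); and an application of $({!}\to)$ followed by uses of the dereliction'd $C_i$ corresponds to finitely many applications of Buszkowski's rule for $S_i$ together with cut.

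The main obstacle will be controlling the ${!}$-formulae in the backward direction: because $\ELsu$ has Lambek's restriction, derivations may shuffle the ${!}C_i$ around in ways that a naive induction cannot track, and one must ensure that these banged formulae never ``leak'' into the succedent or get used to manufacture sequents not corresponding to anything in $\LLu + \Ac$. Lemma~\ref{Lm:ELsRestr} and Lemma~\ref{Lm:nonbang_inherit} are the right tools here: they guarantee that a sequent with only banged formulae in the antecedent has a banged succedent, so the $C_i$ can never be the sole antecedent of a subderivation whose succedent is one of the ${!}$-free types $A_j$ or $H$. One also has to choose the encoding $C_i$ carefully so that it lives in the $\SL$-only fragment and so that $(\SL\to)$ on $C_i$ reproduces exactly one reduction step — encoding $p, q \to r$ as a right-division type requires a small trick (e.g.\ using $r \SL q$ applied after pulling out $p$, or iterating), which is where the routine but fiddly case analysis lives. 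Once the translation and its correctness lemma are in place, undecidability of $\ELsu$ (and hence of $\ELs$, which contains it) follows immediately from Theorem~\ref{Th:LAundec}.
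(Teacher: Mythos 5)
Your overall strategy is the same as the paper's: encode each special non-logical axiom as a banged type prepended to the antecedent ($r \SL (p \SL q)$ for $p \SL q \to r$, and $(r \SL q) \SL p$ for $p, q \to r$ --- the latter being the ``small trick'' you leave open), prove the equivalence $\LLu + \Ac \vdash \Pi \to A \iff \ELsu \vdash {!}\GA, \Pi \to A$, and conclude by Theorem~\ref{Th:LAundec}. Your forward direction matches the paper's essentially step for step, including the observation that the side conditions on $({!}\to)$ and $(\to\SL)$ are harmless because $\Pi$ is non-empty and bang-free.

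The gap is in the backward direction. An induction ``on the last rule'' of the $\ELsu$-derivation does not go through as stated: the premise of a $({!}\to)$ step is a sequent ${!}\GA, \Delta_1, C_i, \Delta_2 \to A$ containing a bare (dereliction'd) copy of an axiom type $C_i$, which lies outside the class of sequents your induction hypothesis covers; and you cannot simply cut such a copy away afterwards in $\LLu + \Ac$, since that would require $\Lambda \to C_i$, which Lambek's restriction forbids. The lemmas you invoke (Lemma~\ref{Lm:ELsRestr} and Lemma~\ref{Lm:nonbang_inherit}) rule out a different pathology --- a subderivation whose antecedent consists only of banged formulae --- and do not resolve this. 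What is actually needed, and what the paper supplies, is a normalization argument: one passes to a simplified calculus for sequents ${!}\Gamma, \Pi \to A$ with bang-free $\Gamma$, $\Pi$, $A$, and shows that $({!}\to)$ permutes upward past $(\to\SL)$ and $(\SL\to)$ until it sits immediately below the $(\SL\to)$ application that consumes the dereliction'd $C_i$; the resulting two-rule block is then read off as exactly one application of the corresponding Buszkowski rule (together with cuts) in $\LLu + \Ac$. Without this focusing-style permutation step, or an equivalent strengthening of the induction invariant that tracks bare occurrences of the $C_i$ and proves they must be consumed in this canonical way, the backward direction --- and hence the reduction --- is not established.
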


We take a set $\Ac$ of non-logical axioms of non-logical axioms
of the forms $p, q \to r$ or $p \SL q \to r$
and encode them
in $\ELs$ using the exponential. 
Let $\Gc_{\Ac} = \{ (r \SL q) \SL p \mid
(p,q \to r) \in \Ac \} \cup \{ r \SL (p \SL q) \mid
(p \SL q \to r) \in \Ac \}$ and let $\Gamma\!_\Ac$ be a
sequence of all types from $\Gc_\Ac$ in any order. Then the following holds:

\begin{lemma}\label{Lm:AxToExp}
$\LL_{{}\SL{}} + \Ac \vdash \Pi \to A \iff
\ELsu \vdash {!} \GA, \Pi \to A$.
\end{lemma}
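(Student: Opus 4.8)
The plan is to prove the two directions of the equivalence separately, in each case by induction on the derivation. The ``$\Rightarrow$'' direction is the substantive one: I would show that every application of a non-logical axiom $p,q\to r$ or $p\SL q\to r$ in $\LL_{\SL}+\Ac$ can be mimicked inside $\ELsu$ by invoking the corresponding type ${!}((r\SL q)\SL p)$ or ${!}(r\SL(p\SL q))$ sitting in the context ${!}\GA$, at the cost of one copy of that bang-formula. The key simulation is: from the axiom $p\SL q\to r$, encoded as the type $r\SL(p\SL q)$, and a premise $\Pi\to p\SL q$ one gets $r\SL(p\SL q),\Pi\to r$ by $(\SL\to)$, which after prefixing an extra ${!}$ via $({!}\to)$ — legal here because $\Pi$ contains a non-bang-formula, as guaranteed by Lemma~\ref{Lm:nonbang_inherit} applied to the non-bang succedent — lets us absorb ${!}(r\SL(p\SL q))$ into the distinguished copy via $(\CONTR)$; the axioms of the form $p,q\to r$ (equivalently $(r\SL q)\SL p$) are handled the same way. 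Weakening, permutation, and contraction on the ${!}$-prefixed context are exactly the structural rules available in $\ELsu$, so the single ``master'' copy of each ${!}\GA$-type can be duplicated as many times as the $\LL_\SL+\Ac$ derivation needs, and moved to wherever a Buszkowski rule $(\RED_i)$ fires; the Lambek rules $(\to\SL)$, $(\SL\to)$ translate directly, with the proviso on $(\to\SL)$ satisfied because $\LL_\SL$ already forbids empty antecedents.

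For the ``$\Leftarrow$'' direction I would argue that the only use $\ELsu$ can make of the formulae ${!}C$ with $C\in\Gc_\Ac$ in the antecedent is, up to the structural rules, to strip the ${!}$ by $({!}\to)$ and then apply $(\SL\to)$ with $C=r\SL(p\SL q)$ or $C=(r\SL q)\SL p$ — which is precisely a Buszkowski reduction $(\RED_2)$ or $(\RED_1)$; every such step is therefore reflected by the corresponding non-logical axiom in $\LL_\SL+\Ac$, and all genuine Lambek rules pull back verbatim. To make this precise I would use cut-elimination for $\ELsu$ (the theorem of Section~5) together with the subformula property, so that in a cut-free $\ELsu$-proof of ${!}\GA,\Pi\to A$ the bang-formulae from $\GA$ can only be introduced by $({!}\to)$ into the position where they are consumed; a secondary induction, keeping track of how many copies of each ${!}C$ have been generated by $(\CONTR)$, then lets me peel off one $\LL_\SL+\Ac$ rule application at a time. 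Here I would also need the remark that $(\to{!})$ never occurs, since the succedent side stays ${!}$-free along the relevant branch, which keeps the analysis within the fragment that directly corresponds to $\LL_\SL+\Ac$.

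The main obstacle I anticipate is bookkeeping the multiplicity of the bang-prefixed axioms under $(\CONTR)$ and the interleaving permutations $(\EXC_1),(\EXC_2)$: one must verify that a single ${!}C$ in the antecedent genuinely suffices to license arbitrarily many uses of the axiom $C$ downstream (this is where contraction and Lambek's restriction interact delicately — each $({!}\to)$ application must find a non-bang-formula in its context, which is why Lemma~\ref{Lm:nonbang_inherit} is invoked), and conversely that the translation back to $\LL_\SL+\Ac$ does not lose or duplicate axiom applications. Because $\ELsu$ is the one-division fragment, I expect the argument to be essentially the argument of Buszkowski's cut-free reformulation of $\LL_\SL+\Ac$ transported across the encoding $\Ac\mapsto{!}\GA$, so once the structural-rule bookkeeping is set up the two inductions should go through routinely. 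Finally, undecidability of $\ELsu$ (hence of $\ELs$) follows by combining this lemma with Theorem~\ref{Th:LAundec}.
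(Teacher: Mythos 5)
Your proposal follows essentially the same route as the paper's proof: the \fbox{$\Rightarrow$} direction simulates each Buszkowski rule by dereliction of the corresponding type from ${!}\GA$ followed by $(\SL\to)$ and contraction, exactly as in the paper, and the \fbox{$\Leftarrow$} direction rests on the same normalization idea, namely permuting the structural and $({!}\to)$ steps so that each decomposition of a $\Gc_\Ac$-type lines up with a $(\RED_i)$ application. The only substantive difference is presentational: the paper makes your ``bookkeeping'' precise via an intermediate simplified calculus and explicit rule-interchange transformations (and remarks that no cut rule for $\ELsu$ is needed --- cut is not a rule of the system, so every derivation is already cut-free), whereas you defer this to a routine secondary induction.
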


\begin{proof}

\fbox{$\Rightarrow$} 
Proceed by induction on the derivation of $\Pi \to A$ in $\LLu + \Ac$.
If $\Pi \to A$ is an axiom of the form $A \to A$, then we get 
$\ELsu \vdash {!}\GA, A \to A$ by application of the  $(\WEAK)$ rule.

If $A = B \SL C$, and $\Pi \to A$ is obtained using the $(\to\SL)$ rule,
then \mbox{$!\GA, \Pi \to A$} is derived using the same rule:
$$
\infer{!\GA, \Pi \to B \SL C}{!\GA, \Pi, C \to B}
$$

Here $\Pi$ is not empty, and consists of non-bang-formulae,
therefore the application of this rule is eligible in $\ELsu$;
$\ELsu \vdash {!}\GA, \Pi, C \to B$ by induction hypothesis.

If $\Pi = \Phi_1, B \SL C, \Psi, \Phi_2$, and $\Pi \to A$ is
obtained by $(\SL\to)$ from $\Psi \to C$ and $\Phi_1, B, \Phi_2 \to A$, then
for $!\GA, \Pi \to A$ we have the following derivation in $\ELsu$, where
${}^*$ means several applications of the rules in any order.

$$
\infer[(\CONTR, \EXC_1)^*]{!\GA, \Phi_1, B \SL C, \Psi, \Phi_2 \to A}
{\infer[(\EXC_1)^*]
{!\GA, !\GA, \Phi_1, B \SL C, \Psi, \Phi_2 \to A}
{
\infer[(\SL\to)]
{!\GA, \Phi_1, B \SL C, !\GA, \Psi, \Phi_2 \to A}
{!\GA, \Psi \to C &
!\GA, \Phi_1, B, \Phi_2 \to A}
}}
$$

Finally, $\Pi \to A$ can be obtained by application of
Buszkowski's rules $(\RED_1)$ or $(\RED_2)$. In the first case,
$A = r$, $\Pi = \Pi_1, \Pi_2$;  $\LL + \Ac \vdash \Pi_1 \to p$, and
$\LL + \Ac \vdash \Pi_2 \to q$. Furthermore, $\Gc_{\Ac} \ni
(r \SL q) \SL p$, thus we get the following derivation in $\ELsu$:

$$
\infer[(\CONTR, \EXC_1)^*]
{!\GA, \Pi_1, \Pi_2 \to r}
{\infer[(\CONTR, \EXC_1)^*]
{!\GA, {!}((r \SL q) \SL p), \Pi_1, \Pi_2 \to r}
{\infer[({!}\to)]
{ {!}((r \SL q) \SL p), !\GA, \Pi_1, !\GA, \Pi_2  \to r}
{\infer[(\SL\to)]
{(r \SL q) \SL p, !\GA, \Pi_1, !\GA, \Pi_2  \to r}
{!\GA, \Pi_1 \to p & 
\infer[(\SL\to)]{r \SL q, !\GA, \Pi_2 \to r}{!\GA, \Pi_2 \to q & r \to r}
}}}}
$$

The application of $({!}\to)$ here is legal, since $\Pi_1$ and $\Pi_2$ are
non-empty and consist of non-bang-formulae.

In the $(\RED_2)$ case, $A = r$, and we have ${!}\GA, \Pi, q \to p$ in
the induction hypothesis. Again, $\Gc_{\Ac} \ni r \SL (p \SL q)$, and we proceed
like this:

$$
\infer[(\CONTR, \EXC_1)^*]{{!}\GA, \Pi \to r}
{\infer[({!}\to)]
{{!}(r \SL (p \SL q)), {!}\GA, \Pi \to r}
{\infer[(\SL\to)]
{r \SL (p \SL q), {!}\GA, \Pi \to r}
{\infer[(\to\SL)]
{{!}\GA, \Pi \to p \SL q}
{{!}\GA, \Pi, q \to p}
 & r \to r}
}}
$$

Here, again, $\Pi$ is not empty and consists of non-bang-formulae, therefore
we can legally apply $({!}\to)$ and $(\to\SL)$.

\fbox{$\Leftarrow$}  
For deriving sequents of the form ${!}\Gamma, \Pi \to A$, where $\Gamma$,
$\Pi$, and $A$ do not contain the exponential, one can use a simpler calculus
than $\ELsu$:
$$
{!}\Gamma, p \to p
$$
$$
\infer[(\to\SL)\mbox{, where $\Pi \ne\Lambda$}]{{!}\Gamma, \Pi \to A \SL B}{{!}\Gamma, \Pi, B \to A}
\qquad
\infer[(\SL\to)]{{!}\Gamma, \Delta_1, A \SL B, \Pi, \Delta_2 \to C}
{{!}\Gamma, \Pi \to B & {!}\Gamma, \Delta_1, A, \Delta_2 \to C}
$$
$$
\infer[({!}\to)\mbox{, where $B$ is a type from $\Gamma$ and $\Delta_1, \Delta_2 \ne \Lambda$}]
{{!}\Gamma, \Delta_1, \Delta_2 \to A}{{!}\Gamma, \Delta_1, B,
\Delta_2 \to A}
$$

Here $(\WEAK)$ is hidden into the axiom, $(\CONTR)$ comes within
$(\to\SL)$, and $({!}\to)$ includes both $(\EXC_i)$ and $(\CONTR)$ in the
needed form. One can easily see that if $\ELsu \vdash {!}\Gamma, \Pi \to A$,
where $\Gamma$, $\Pi$, and $A$ do not contain ${!}$, then this sequent is
derivable in the simplified calculus. Moreover, the $({!}\to)$ rule is
interchangeable with the others in the following ways:

$$
\infer[({!}\to)]{{!}\Gamma, \Delta_1, \Delta_2 \to A \SL B}
{\infer[(\to\SL)]
{{!}\Gamma, \Delta_1, C, \Delta_2 \to A \SL B}
{{!}\Gamma, \Delta_1, C, \Delta_2, B \to A}
}
\qquad
\mbox{\raisebox{1.2em}{$\leadsto$}}
\qquad
\infer[(\to\SL)]{{!}\Gamma, \Delta_1, \Delta_2 \to A \SL B}
{\infer[({!}\to)]
{{!}\Gamma, \Delta_1, \Delta_2, B \to A}
{{!}\Gamma, \Delta_1, C, \Delta_2, B \to A}
}
$$

\vskip 10pt

$$
\infer[({!}\to)]{{!}\Gamma, \Delta_1, A \SL B, \Pi, \Delta'_2, \Delta''_2 \to C}
{\infer[(\SL\to)]
{{!}\Gamma, \Delta_1, A \SL B, \Pi, \Delta'_2, D, \Delta''_2 \to C}
{{!}\Gamma, \Pi \to B & {!}\Gamma, \Delta_1, A, \Delta'_2, D, \Delta''_2 \to C}
}
$$
\centerline{\turnbox{270}{$\leadsto$}}
\vskip 10pt
$$
\infer[(\SL\to)]{{!}\Gamma, \Delta_1, A \SL B, \Pi, \Delta'_2, \Delta''_2 \to C}
{{!}\Gamma, \Pi \to B & \infer[({!}\to)]
{{!}\Gamma, \Delta_1, A, \Delta'_2, \Delta''_2 \to C}
{{!}\Gamma, \Delta_1, A, \Delta'_2, D, \Delta''_2 \to C}
}
$$
And the same, if $D$ appears inside $\Delta_1$ or $\Pi$. Finally, consecutive
applications of $({!}\to)$ are always interchangeable.

After applying these transformations, we achieve a derivation where
$({!}\to)$ is applied immediately after applying $(\SL\to)$ with the same
active type (the other case, when it is applied after the axiom to $p$, is
impossible, since then it violates the non-emptiness condition).
In other words, applications of $({!}\to)$ appear only in the following
two situations:
$$
\infer[({!}\to)]{{!}\Gamma, \Delta_1, \Pi, \Delta_2 \to A}
{
\infer[(\SL\to)]{{!}\Gamma, \Delta_1, (r \SL q) \SL p, \Pi, \Delta_2 \to A}
{{!}\Gamma, \Pi \to p & {!}\Gamma, \Delta_1, r \SL q, \Delta_2 \to A}
}
$$
and
$$
\infer[({!}\to)]{{!}\Gamma, \Delta_1, \Pi, \Delta_2 \to A}
{\infer[(\SL\to)]{{!}\Gamma, \Delta_1, r \SL (p \SL q), \Pi, \Delta_2 \to A}
{{!}\Gamma, \Pi \to p \SL q & {!}\Gamma, \Delta_1, r, \Delta_2 \to A}
}
$$

Now we prove the statement $\ELsu \vdash {!}\GA, \Pi \to A \Rightarrow
\LL + \Ac \vdash \Pi \to A$ by induction on the above canonical derivation.
 If ${!}\GA, \Pi \to A$ is an axiom or is obtained by an application of $(\SL\to)$ or
$(\to\SL)$, we apply the corresponding rules in $\LL + \Ac$, so the only interesting case
is $({!}\to)$. Consider the two possible situations.

In the $(r \SL q) \SL p$ case, by induction hypothesis we get 
$\LL + \Ac \vdash \Pi \to p$ and $\LL + \Ac \vdash \Delta_1, r \SL q, \Delta_2 \to A$,
and then we develop the following derivation
in $\LL+\Ac$ (recall that $(\CUT)$ is admissible there):
$$
\infer[(\CUT)]{\strut\Delta_1, \Pi, \Delta_2 \to A}
{\Pi \to p & 
\infer[(\CUT)]{\strut \Delta_1, p, \Delta_2 \to A}
{\strut \infer[(\to\SL)]{p \to r \SL q}{p,q\to r} &
\Delta_1, r \SL q, \Delta_2 \to A}
}
$$

In the case of $r \SL (p \SL q)$, the derivation looks like this:
$$
\infer[(\CUT)]{\strut \Delta_1, \Pi, \Delta_2 \to A}
{\Pi \to p \SL q & 
\infer[(\CUT)]{\strut \Delta_1, p \SL q, \Delta_2 \to A}
{
p \SL q \to r
& \Delta_1, r, \Delta_2 \to A}}
$$
\qed
\end{proof}

Note that in this proof we do not need any form
of the cut rule for $\ELs$.

\begin{theorem}
The derivability problem for $\ELm_{\SL}$ (and, thus, for $\ELm$)
is undecidable.
\end{theorem}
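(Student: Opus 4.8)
The plan is to reduce derivability in $\LLu + \Ac$ --- undecidable for a suitable $\Ac$ by Theorem~\ref{Th:LAundec} --- to derivability in $\ELm_{\SL}$, reusing the encoding of special non-logical axioms into the exponential that was set up for $\ELsu$ in Lemma~\ref{Lm:AxToExp} and bridging between $\ELsu$ and $\ELm_{\SL}$ through the conservativity result Lemma~\ref{Lm:conserv}. For a set $\Ac$ of special non-logical axioms let $\GA$ be, as in Lemma~\ref{Lm:AxToExp}, a fixed sequence enumerating $\Gc_\Ac$; all types in $\GA$ are ${!}$-free (they are built from variables using $\SL$ alone), and so are $\Pi$ and $A$ whenever we start from a sequent $\Pi \to A$ of $\LLu + \Ac$.

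The one place that needs genuine work is to verify that Lemma~\ref{Lm:conserv}, although stated for the full calculi, relativises to their right-division fragments: for ${!}$-free $\Gamma$, $\Pi$, $A$ one has $\ELm_{\SL} \vdash {!}\Gamma, \Pi \to A$ if and only if $\ELsu \vdash {!}\Gamma, \Pi \to A$. The argument behind Lemma~\ref{Lm:conserv} is connective-agnostic, so the plan is to rerun it inside the fragment: in a cut-free derivation of a sequent of the shape ${!}\Gamma, \Pi \to A$ with ${!}$-free parameters the subformula property pins down every occurrence of a ${!}$-formula as one of the top-level ${!}B$ with $B$ a type of $\Gamma$; hence $(\to{!})$ is never applicable (its conclusion, and anything that a rule acting on such a sequent from below would produce, contains some ${!}B$ strictly inside a larger formula, which is not a subformula of the endsequent), the marking bit of every antecedent type is forced to be $1$ exactly when the type begins with ${!}$, the ``unmarked type in the antecedent'' requirement of $\ELm$ collapses to the ``non-bang-formula in the antecedent'' requirement of $\ELs$, the two rule sets coincide on such sequents, and the induced bijection between derivations introduces no $\BS$. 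I expect this relativisation to be the only non-routine point; in particular one must check that no compound $\SL$-formula can acquire marking bit $1$, which holds because a $1$ is created only on a ${!}$-formula and is propagated down the left spine of a $\SL$-formula, a spine that in our ${!}$-free subformulae always terminates at a variable, necessarily marked $0$. Everything else is a transcription of the $\ELsu$ argument, and, as there, no form of the cut rule for $\ELm$ is used.

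With the bridge available, the assembly is immediate. For the $\Ac$ produced by Theorem~\ref{Th:LAundec}, chaining Lemma~\ref{Lm:AxToExp} with the relativised Lemma~\ref{Lm:conserv} yields
$$
\LLu + \Ac \vdash \Pi \to A
\iff
\ELsu \vdash {!}\GA, \Pi \to A
\iff
\ELm_{\SL} \vdash {!}\GA, \Pi \to A .
$$
The map $(\Pi \to A) \mapsto ({!}\GA, \Pi \to A)$ merely prepends a fixed finite sequence of ${!}$-formulae, hence is computable, so this is a many-one reduction of the undecidable derivability problem of $\LLu + \Ac$ to that of $\ELm_{\SL}$, and $\ELm_{\SL}$ is undecidable. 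Finally, by the subformula property a sequent built only from variables, $\SL$ and ${!}$ is derivable in $\ELm$ exactly when it is derivable in $\ELm_{\SL}$, so $\ELm$ is undecidable as well.
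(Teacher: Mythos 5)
Your proof is correct and follows essentially the same route as the paper: reduce $\LLu+\Ac$ to $\ELsu$ via Lemma~\ref{Lm:AxToExp} and transfer to $\ELm_{\SL}$ via Lemma~\ref{Lm:conserv}, invoking Theorem~\ref{Th:LAundec}. Your careful verification that Lemma~\ref{Lm:conserv} relativises to the one-division fragments (marking bits forced to $1$ exactly on bang-formulae, $(\to{!})$ excluded by the subformula property) is a legitimate filling-in of a step the paper applies silently.
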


\begin{proof}
By Lemma~\ref{Lm:conserv} we have that if $\Gamma$, $\Pi$, and $A$ do not contain ${!}$, then
$\ELm_{\SL} \vdash {!}\Gamma, \Pi \to A \iff \ELsu \vdash
{!}\Gamma, \Pi \to A$. 
\qed
\end{proof}

Of course, everything discussed above can be dually performed for
$\BS$ instead of $\SL$, yielding undecidability for $\ELm_{\BS}$
and $\ELs\!_{\BS}$.

\section{Conclusion}\label{S:conclusion}

The derivability problem for the original Lambek calculus,
without exponential modalities, is decidable and belongs
to the NP class. This happens because the cut-free proof of
a sequent has linear size with respect to the sequent's length.
For the full Lambek calculus~\cite{Pentus06} and for its fragments
with any two of three connectives (two divisions~\cite{Savateev09LFCS}
 or one division and
the product~\cite{Savateev09PhD}) the derivability problem
is NP-complete. 

On the other hand, for derivability problem in $\LL_{\SL}$
there exists a polynomial time algorithm~\cite{Savateev08CSR}.
Thus the one-division fragment of the Lambek calculus appears
to be significantly simpler.
Despite this, in our undecidability results for $\ELs$ and
$\ELm$ we use only one
of the two divisions.

\subsection*{Related Work}

In the case of commutative linear logic  Nigam and Miller~\cite{Nigam09} consider calculi
that have several modalities interacting with each other, and
different modalities are controlled by different sets of
structural rules. These modalities are called 
{\em subexponentials}.
A systematic study of subexponentials in the non-commutative case, under the umbrella of the Lambek
calculus and cyclic linear logic, is performed in~\cite{KanKuzNigSce18MSCS}.
We also plan to study systems with (sub)exponentials, built on top of light~\cite{Girard98} and
soft~\cite{Lafont04} linear logic. Some preliminary results in this direction were presented at
the 2018 Mal'tsev Meeting~\cite{KanKuzSce18Maltsev}.

It appears that the technique used in the \fbox{$\Leftarrow$} part
of the proof of Lemma~\ref{Lm:AxToExp} is an non-commutative instance of
{\em focusing}~\cite{Andreoli92,Nigam08}.
Focusing techniques in the non-commutative case, for the Lambek calculus extended
with subexponentials, are developed in~\cite{KanKuzNigSce18IJCAR}.

\subsection*{Acknowledgments}
This research was performed in part during visits of Stepan Kuznetsov and Max Kanovich
to the University of Pennsylvania. We greatly appreciate support of the
Mathematics Department of the University.
A part of the work was also done during the stay of Andre Scedrov at
the National Research University Higher School of Economics.
 We would like to thank 
S.~O.~Kuznetsov and I.~A.~Makarov for hosting us there.


This article was prepared within the framework of the HSE University Basic Research Program and funded by the Russian Academic Excellence Project `5-100.'







\section*{References}

\bibliography{EL_APAL}

\end{document}